\title[Proof of Chapoton's conjecture on Newton polygons]{Proof of Chapoton's conjecture on Newton polygons of $q$-Ehrhart polynomials}
\author{Jang Soo Kim}
\address{
Department of Mathematics, Sungkyunkwan University, Suwon 16420,
South Korea}
\email{jangsookim@skku.edu}
\author{U-Keun Song}
\address{
Department of Mathematics, Sungkyunkwan University, Suwon 16420,
South Korea}
\email{sukeun319@skku.edu}
\date{\today}
\thanks{This work was supported by NRF grants \#2016R1D1A1A09917506 and \#2016R1A5A1008055.}
\keywords{$q$-Ehrhart polynomial, Newton polygon, order polytope, $P$-partition}
\subjclass[2010]{Primary: 06A07; Secondary: 52B20, 05A30}
\newtheorem{thm}{Theorem}[section]
\newtheorem{lem}[thm]{Lemma}
\newtheorem{prop}[thm]{Proposition}
\newtheorem{cor}[thm]{Corollary}
\theoremstyle{definition}
\newtheorem{defn}[thm]{Definition}
\newtheorem{conj}[thm]{Conjecture}
\newcommand\Qbinom[3]{\genfrac{[}{]}{0pt}{}{#1}{#2}_{#3}}
\newcommand\qbinom[2]{\Qbinom{#1}{#2}{q}}
\newcommand\LL{\mathcal{L}}
\newcommand\NN{\mathbb{N}}
\newcommand\order{\mathcal{O}}
\newcommand\Sym{\mathfrak{S}}
\newcommand\maj{\operatorname{maj}}
\newcommand\Des{\operatorname{Des}}
\newcommand\des{\operatorname{des}}
\newcommand\DB{\operatorname{DB}}
\newcommand\mc{\operatorname{mc}}
\newcommand\PP{\mathcal{P}}
\newcommand\RR{\mathbb{R}}
\newcommand\QQ{\mathbb{Q}}
\newcommand\ZZ{\mathbb{Z}}
\newcommand\NT{\mathrm{Newton}}
\renewcommand\vec[1]{\mathbf{#1}}
\begin{document}

\begin{abstract}
Recently, Chapoton found a $q$-analog of Ehrhart polynomials, which are polynomials in $x$ whose coefficients are rational functions in $q$. 
Chapoton conjectured the shape of the Newton polygon of the numerator of the $q$-Ehrhart polynomial of an order polytope. In this paper, we prove Chapoton's conjecture.
\end{abstract}

\maketitle

\section{Introduction}

In 1962, Ehrhart \cite{Ehrhart} discovered certain polynomials associated to lattice polytopes.  These polynomials are now widely known and called Ehrhart polynomials. They contain important information of lattice polytopes such as the number of lattice points in the polytope, the number of lattice points in the relative interior and the relative volume of the polytope.

Recently, Chapoton \cite{Chapoton2016} found a $q$-analog of Ehrhart polynomials and generalized some properties of them. A $q$-Ehrhart polynomial is a polynomial in variable $x$ whose coefficients are rational functions in $q$. Thus we can write a $q$-Ehrhart polynomial as a rational function in $q$ and $x$ whose numerator is a polynomial in $q$ and $x$, and whose denominator is a polynomial in $q$.  In the same paper, Chapoton conjectured the shape of the Newton polygon of the numerator of the $q$-Ehrhart polynomial associated to an order polytope. The goal of this paper is to prove Chapoton's conjecture.

First, we briefly review basic properties of Ehrhart polynomials and their $q$-analogs. See
\cite{Barvinok1999, Beck2007, Beck2012} for more details in Ehrhart polynomials. 

A point in $\RR^m$ is called a \emph{lattice point} if all the coordinates are integers. A \emph{lattice polytope} is a polytope whose vertices are lattice points.  All polytopes considered in this paper are  lattice polytopes.

For a polytope $M$ and an integer $n$, we denote by $nM$ the dilation of $M$ by a scale factor of $n$, i.e.,
\[
nM = \{n\vec x: \vec x\in M\}.
\]
For a lattice polytope $M$ in $\RR^m$, 
there exists a polynomial $E(x)$, called
the \emph{Ehrhart polynomial of $M$}, satisfying 
the following interesting properties:
\begin{itemize}
\item $E(n) = |nM\cap \ZZ^m|$ for all integers $n\ge0$.
\item $(-1)^{\dim M}E(-n) = |nM^\circ\cap \ZZ^m|$ for all integers $n\ge0$, where
$\dim M$ is the dimension of $M$ and $M^\circ$ is the relative interior of $M$.
\item The degree of $E(x)$ is equal to the dimension of $M$.
\item The leading coefficient of $E(x)$ is equal to the relative volume of $M$.
\end{itemize}

For a polytope $M$ in $\RR^m$, let
\[
W(M,q) = \sum_{\vec x\in M\cap \ZZ^m} q^{|\vec x|},
\]
where for $\vec x=(x_1,\dots,x_m)$ we denote
\[
|\vec x| = x_1+\dots+x_m.
\]

We use the standard notation for $q$-integers:
for $n\in\ZZ$, 
\[
[n]_q := \frac{1-q^n}{1-q},
\]  
and, for integers $n\ge k\ge 0$, 
\[
[n]_q!: = [1]_q[2]_q\dots[n]_q, \qquad
\qbinom{n}{k}: = \frac{[n]_q!}{[k]_q![n-k]_q!}.
\]
Note that for $n\ge0$ and $a,b\in\ZZ$, we have 
\begin{align*}
[n]_q &= 1+q+q^2+\dots+q^{n-1},  \\
[-n]_q &= -q^{-n}[n]_q,\\
[a+b]_q &= [a]_q+q^a [b]_q.
\end{align*}

Chapoton \cite[Theorem~3.1]{Chapoton2016} found a $q$-analog of Ehrhart polynomials as follows. 

\begin{thm}[Chapoton]\label{thm:Chapoton}
Let $M$ be a polytope satisfying  the following conditions:
\begin{itemize}
\item For every vertex $\vec x$ of $M$, we have $|\vec x|\ge0$.
\item For every edge between two vertices $\vec x$ and $\vec y$ of $M$, 
we have $|\vec x|\ne |\vec y|$.
\end{itemize}
Then there is a polynomial $E(x)\in \QQ(q)[x]$ such that for every integer $n\ge0$,
\[
E([n]_q) = W(nM,q).
\]
\end{thm}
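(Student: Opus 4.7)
The plan is to apply Brion's theorem to the dilated polytope $nM$: this will express $W(nM,q)$ as a finite sum indexed by the vertices of $M$ in which the dependence on $n$ enters only through a factor $q^{n|\vec v|}$; the rest is then a matter of rewriting $q^n$ in terms of $[n]_q$.

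First I would unpack the Brion decomposition. For each vertex $\vec v$ of $M$, let $K_{\vec v}$ denote the pointed cone of directions into $M$ at $\vec v$ (translated to the origin); the tangent cone at the vertex $n\vec v$ of $nM$ is then $n\vec v + K_{\vec v}$, a translate of a cone that does not depend on $n$. Brion's theorem therefore yields
\[
W(nM,q) \;=\; \sum_{\vec v} q^{n|\vec v|}\, \tau_{\vec v}(q),
\qquad
\tau_{\vec v}(q) \;:=\; \sum_{\vec y \in K_{\vec v}\cap \ZZ^m} q^{|\vec y|} \;\in\; \QQ(q),
\]
where each summand is interpreted as a rational function in $q$. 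The edge hypothesis ensures that every extreme ray $\vec u$ of $K_{\vec v}$ (a positive multiple of $\vec w - \vec v$ for some edge-neighbour $\vec w$) satisfies $|\vec u| \ne 0$; consequently a signed simplicial decomposition of $K_{\vec v}$ produces contributions of the form $P(q)/\prod_i (1 - q^{|\vec u_i|})$ that are well-defined in $\QQ(q)$, and $\tau_{\vec v}(q)$ is a genuine rational function that does not depend on $n$.

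Second, I would use the identity $q^n = 1 - (1-q)[n]_q$ together with the vertex hypothesis $|\vec v|\ge 0$ to rewrite each factor $q^{n|\vec v|} = (1 - (1-q)[n]_q)^{|\vec v|}$ as a polynomial in $[n]_q$ of degree $|\vec v|$ with coefficients in $\QQ[q]$. Setting
\[
E(x) \;:=\; \sum_{\vec v} \tau_{\vec v}(q)\, \bigl(1 - (1-q)x\bigr)^{|\vec v|} \;\in\; \QQ(q)[x]
\]
then produces a polynomial satisfying $E([n]_q) = W(nM,q)$ for every $n\ge 0$.

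The main obstacle is justifying rigorously that each $\tau_{\vec v}(q)$ is a well-defined rational function using \emph{only} the edge hypothesis. The extreme rays of $K_{\vec v}$ have nonzero $|\cdot|$-value, but a careless triangulation of a non-simplicial $K_{\vec v}$ can introduce auxiliary rays along which $|\cdot|$ vanishes, producing spurious poles of the form $1-q^0$. Bypassing this requires either selecting a signed cone decomposition that is generic with respect to the linear form $|\cdot|$, or adopting the valuation-theoretic formulation of Brion's theorem, in which $\tau_{\vec v}(q)$ is defined intrinsically on the algebra of polyhedra before any decomposition is fixed.
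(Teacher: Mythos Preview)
The paper does not contain its own proof of this theorem: Theorem~\ref{thm:Chapoton} is quoted from Chapoton's paper \cite[Theorem~3.1]{Chapoton2016} and is used as a black box. There is therefore nothing in the present paper to compare your argument against.

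That said, your outline is essentially the argument Chapoton gives, and it is correct. One remark on the ``main obstacle'' you flag at the end: it is less serious than you suggest. Every pointed polyhedral cone admits a triangulation into simplicial cones using only its own extreme rays (take a transverse hyperplane section, triangulate the resulting polytope without adding vertices, and cone back). Hence for each vertex $\vec v$ you can write
\[
\tau_{\vec v}(q)\;=\;\sum_{\sigma}\frac{P_\sigma(q)}{\prod_i\bigl(1-q^{|\vec u_i^{(\sigma)}|}\bigr)},
\]
where every ray $\vec u_i^{(\sigma)}$ is an extreme ray of $K_{\vec v}$, i.e.\ a positive multiple of $\vec w-\vec v$ for some edge-neighbour $\vec w$; the edge hypothesis then gives $|\vec u_i^{(\sigma)}|\ne 0$ directly, and no genericity argument or valuation-theoretic detour is needed. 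With that simplification your two steps (Brion, then rewrite $q^{n|\vec v|}=(1-(1-q)[n]_q)^{|\vec v|}$ using $|\vec v|\ge 0$) give the result cleanly.
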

The polynomial $E(x)$ in Theorem~\ref{thm:Chapoton} is called
the \emph{$q$-Ehrhart polynomial} of the polytope $M$. We note that in \cite{Chapoton2016}, more generally, Chapoton considers a linear form $\lambda(\vec x)$ on $\RR^m$ in place of $|\vec x|$. In this setting with a linear form, Chapoton \cite[Theorem~3.5]{Chapoton2016} also shows a nice $q$-analog of the Ehrhart-Macdonald reciprocity:
\[
E([-n]_q) =(-1)^{\dim M} W(nM^\circ,1/q).
\]

We note that Kim and Stanton \cite[Theorem~9.3]{KimStanton17} showed that the leading coefficient of the $q$-Ehrhart polynomial
of an order polytope is equal to the $q$-volume of the order polytope, which is defined as a Jackson's $q$-integral over the order polytope. 

In order to state Chapoton's conjecture we need some notation and terminology.

For a polynomial $f(x_1,\dots,x_k)$ in $x_1,\dots,x_k$, we denote by
$[x_1^{i_1}\dots x_k^{i_k}]f(x_1,\dots,x_k)$ the coefficient of $x_1^{i_1}\dots x_k^{i_k}$ in $f(x_1,\dots,x_k)$.
For a polynomial $f(x_1,\dots,x_k)$ in $x_1,\dots,x_k$, the \emph{Newton polytope} of $f(x_1,\dots,x_k)$, denoted by $\NT(f(x_1,\dots,x_k))$, is the convex hull of the points $(i_1,\dots,i_k)$ such that $[x_1^{i_1}\dots x_k^{i_k}]f(x_1,\dots,x_k)\ne 0$. In this paper, we consider \emph{Newton polygons}, which are Newton polytopes of two-variable functions. 

For a poset $P$  on $\{1,2,\dots,m\}$, the \emph{order polytope}  $\order(P)$ of $P$ is defined by
\[
\order(P)=\{(x_1,\dots,x_m)\in [0,1]^m: x_i\le x_j \mbox{ if } i\le_P j\}.
\]

As mentioned in \cite{Chapoton2016}, using the properties of vertices and edges of an order polytope in \cite{Stanley_two_poset} one can check that 
every order polytope satisfies the conditions in Theorem~\ref{thm:Chapoton}. Therefore, we can consider the $q$-Ehrhart polynomial of an order polytope. 

Let $E_P(x)$ be the $q$-Ehrhart polynomial of $\order(P)$. We denote by $N_P(q,x)$ be the numerator of $E_P(x)$. More precisely, $N_P(q,x)$ is the unique polynomial in $\ZZ[q,x]$ with positive leading coefficient such that
\[
E_P(x) = \frac{N_P(q,x)}{D(q)},
\]
for some polynomial $D(q)\in \ZZ[q]$ with $\gcd(N_P(q,x),D(q))=1$.

For integers $1\le a_1\le a_2\le \dots\le a_m$ and $h\ge a_1+\dots+a_m$, we define $C(a_1,\dots,a_m;h)$ to be the convex hull of the points $(0,0)$, $(a_1+\dots+a_i,i)$ for $1\le i\le m$, $(h,m)$ and $(h-m,0)$. See Figure~\ref{fig:C} for an example.

\begin{figure}
  \centering
\begin{tikzpicture}[scale=0.5]
\draw[step=1cm,gray,very thin] (0,0) grid (10.5,4.5); 
\foreach \x in {1,...,10}
\draw (\x cm,1pt) -- (\x cm,-1pt) node[anchor=north] {$\x$};
\foreach \y in {1,...,4}
\draw (1pt, \y cm) -- (-1pt, \y cm) node[anchor=east] {$\y$};
\draw[->,thick] (-.5,0) -- (11.5,0) node[anchor=north] {$q$};
\draw[->,thick] (0,-.5) -- (0,5.5) node[anchor=north west] {$x$};
\draw [fill=lightgray,very thick] (0,0) -- (1,1) -- (3,2) -- (5,3) -- (8,4) -- (10,4) -- (6,0) -- (0,0);
\draw (0,0) node[anchor=north east] {$0$};
\end{tikzpicture}
\caption{The polygon $C(1,2,2,3;10)$ in the $(q,x)$-coordinate system.}
\label{fig:C}  
\end{figure}
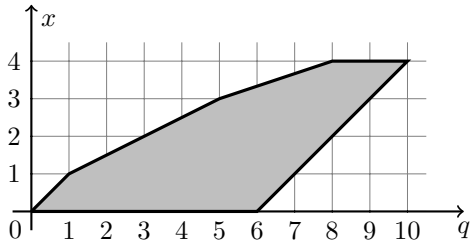

Let $P$ be a poset and $x\in P$.
A \emph{chain ending at $x$} (resp.~\emph{starting at $x$}) is a subset $\{t_1<_P \dots <_P t_k\}$
of $P$ with $t_k=x$ (resp.~$t_1=x$). The \emph{size} of a chain is the number of elements in the chain. 
 We denote by $\mc_P(x)$ the maximum size of a chain ending at $x$. We also denote by $\overline{\mc}_P(x)$ the maximum size of a chain starting at $x$. When there is no possible confusion, we will simply write as $\mc(x)$ and $\overline{\mc}(x)$ instead of $\mc_P(x)$ and $\overline{\mc}_P(x)$.

In \cite[Conjecture~5.3]{Chapoton2016}, Chapoton proposed the following conjecture on the shape of the Newton polygon of $N_P(q,x)$.

\begin{conj}\label{conj:chapoton}
  Let $P$ be a poset on $\{1,2,\dots,m\}$. Suppose that $a_1\le a_2\le \dots\le a_m$
is the increasing rearrangement of $\overline{\mc}(1),\dots,\overline{\mc}(m)$.  Then the Newton polygon of the numerator of the $q$-Ehrhart polynomial of $\order(P)$ is given by \[ \NT(N_P(q,x))=C(a_1,\dots,a_m;h), \] for some integer $h\ge a_1+\dots+a_m$. 
 \end{conj}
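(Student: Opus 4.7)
The plan is to reduce the conjecture to two identities on the coefficients of $N_P(q,x)$ in $x$: for every $0 \le k \le m$,
\[
\mathrm{ord}_q [x^k]N_P(q,x) = a_1+\cdots+a_k \quad\text{and}\quad \deg_q[x^k]N_P(q,x) = h-m+k
\]
for one fixed integer $h$. Because $[x^0]N_P(q,x)=D(q)$ is coprime to $N_P$ and has nonzero constant term, we have $\mathrm{ord}_q D=0$ and $\deg_q D = h-m$, so these identities are equivalent to the analogous statements $\mathrm{ord}_q[x^k]E_P(x)=a_1+\cdots+a_k$ and $\deg_q[x^k]E_P(x)=k$ in $\QQ(q)$. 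Once they are established, every nonzero monomial of $N_P$ lies in $C(a_1,\dots,a_m;h)$, and the five vertices of $C$ occur with nonzero coefficient, so $\NT(N_P(q,x)) = C(a_1,\dots,a_m;h)$.

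To prove the identities, combine the standard $P$-partition formula with the substitution $x=[n]_q$ (using $[n+j]_q=q^jx+[j]_q$ for all $j\in\ZZ$) to obtain the explicit expansion
\[
[m]_q!\, E_P(x) = \sum_{\sigma\in\LL(P)} q^{\operatorname{comaj}(\sigma)}\prod_{j=1-\des(\sigma)}^{m-\des(\sigma)}\bigl(q^jx+[j]_q\bigr),
\]
in which the factors with $j<0$ contribute an overall $q^{-\binom{\des(\sigma)}{2}}$ after rewriting $q^jx+[j]_q=q^j(x-[-j]_q)$. A factor-by-factor expansion then shows that the $q$-valuation of the coefficient of $x^k$ in the $\sigma$-summand is
\[
\operatorname{stat}(\sigma,k) := \operatorname{comaj}(\sigma) - \binom{\des(\sigma)}{2} + \binom{\max(k-\des(\sigma)+1,\,1)}{2},
\]
while the $q$-degree obeys a dual closed formula.

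The combinatorial heart is then the identity $\min_{\sigma\in\LL(P)}\operatorname{stat}(\sigma,k)=a_1+\cdots+a_k$, and its dual for the maximum (which also certifies a uniform value of $h$). The lower bound $\ge a_1+\cdots+a_k$ should follow from the universal positional constraint $\sigma^{-1}(v)\le m-\overline{\mc}(v)+1$ that every linear extension satisfies, which restricts the possible descent patterns. The matching upper bound is furnished by an explicit extremal $\sigma$ built greedily, by processing elements of $P$ in increasing order of $\overline{\mc}$-value and breaking ties compatibly with $P$. The hardest and most delicate step, and the main obstacle, is verifying non-cancellation at $q^{a_1+\cdots+a_k}$ in the full sum over $\LL(P)$: this will require either proving uniqueness of the extremal $\sigma$ or constructing a sign-reversing involution on the set of tied minimizers that preserves $\des$ and $\operatorname{comaj}$. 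A symmetric argument handles the $q$-degree identity and pins down $h$, completing the proof that $\NT(N_P(q,x))=C(a_1,\dots,a_m;h)$.
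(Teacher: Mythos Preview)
Your outline follows essentially the same route as the paper: express $[m]_q!E_P(x)$ as a sum over linear extensions, compute the $q$-order and $q$-degree of each $[x^k]$-coefficient, and match them to the boundary of $C(a_1,\dots,a_m;h)$. The paper passes to $\overline{P}$ (relabeled to be naturally labeled) and uses $\maj$ rather than your $\operatorname{comaj}$, but this is cosmetic.

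The gap you flag yourself---non-cancellation at the minimum---is real, but your two proposed fixes (uniqueness of the minimizer, or a sign-reversing involution) are both harder than necessary. The paper's resolution is a positivity argument. First, one isolates the identity permutation ($\des=0$) as a separate summand $A$, so the remaining sum $B$ runs over $\pi$ with $\des(\pi)\ge 1$. Second, a short descent-removal lemma (if $\pi\in\LL(P)$ has largest descent $c$, sort the tail beyond the penultimate descent to obtain $\sigma\in\LL(P)$ with $\Des(\sigma)=\Des(\pi)\setminus\{c\}$) shows that any $\pi$ with $\des(\pi)>k$ is strictly beaten by some $\tau$ with $\des(\tau)=k$; hence the minimum of your $\operatorname{stat}(\sigma,k)$ over $\LL(P)$ is already attained among $\sigma$ with $1\le\des(\sigma)\le k$. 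Third, for such $\sigma$ the lowest-order monomial in the $\sigma$-summand comes from a \emph{unique} choice of factors (take $x$ from every $(x-[i]_q)$ and from the first $k-\des(\sigma)$ factors $(q^ix+[i]_q)$), so its coefficient is $+1$. Summing positive contributions gives a nonzero coefficient at $q^{a_1+\cdots+a_k}$ with no need to count minimizers or build an involution. The same descent-removal lemma also makes your positional-constraint approach to the lower bound unnecessary: the paper instead observes that the $i$th descent block of any $\pi\in\LL(P)$ lies inside $\{x:\mc(x)\le i\}$, reducing $\operatorname{stat}(\pi,k)$ to a weighted set-partition optimization with an explicit minimum (their Lemma~3.3 and Proposition~3.4).

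Two small corrections. Your claim that the $q$-valuation of the $\sigma$-summand \emph{equals} $\operatorname{stat}(\sigma,k)$ is only a lower bound when $\des(\sigma)>k$, because of possible internal cancellation; the paper only needs the inequality in that range. And $C(a_1,\dots,a_m;h)$ has up to $m+3$ vertices, not five; since the left boundary is piecewise linear with a breakpoint at every level, you really do need the $q_{\min}$ identity for each $0\le k\le m$, not just at a few corners.
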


 The goal of this paper is to prove Conjecture~\ref{conj:chapoton}.  As Chapoton points out in \cite{Chapoton2016}, the $q$-Ehrhart polynomial $E_P(x)$ of $\order(P)$ can be understood as a generating function for $P$-partitions of $\overline{P}$, the dual poset of $P$. It is well-known that the generating function for $P$-partitions can be expressed in terms of linear extensions of the poset. One of the main ingredients of our proof of Conjecture~\ref{conj:chapoton} is Corollary~\ref{cor:mc}, which gives a description of the minimum of $\maj(\pi)-k \des(\pi)$ over all linear extensions $\pi$ of $P$.

The rest of this paper is organized as follows.  In Section~\ref{sec:main-result}, we recall necessary definitions and state our main result (Theorem~\ref{thm:main}), which describes the precise shape of the Newton polygon of $[m]_q!E_{P}(x)$. Then we show that Theorem~\ref{thm:main} implies Conjecture~\ref{conj:chapoton}. In Section~\ref{sec:some-prop-posets} we find some property of the linear extensions of a poset. In Section~\ref{sec:proof-theor-refthm:m} we prove Theorem~\ref{thm:main}.

\section{The main result}
\label{sec:main-result}

In this section we state our main theorem, which implies Conjecture~\ref{conj:chapoton}. 

We first recall some definitions on permutations and posets. We refer the reader to \cite{EC1} for more details.

The set of nonnegative integers is denoted by $\NN$.

Let $\Sym_m$ be the set of permutations of $\{1,2,\dots,m\}$. For $\pi=\pi_1\dots\pi_m\in\Sym_m$,
a \emph{descent} of $\pi$ is an integer $1\le i\le m-1$ such that $\pi_i>\pi_{i+1}$. We denote by $\Des(\pi)$ the set of descents of $\pi$. We define $\maj(\pi)=\sum_{i\in\Des(\pi)}i$ and $\des(\pi)=|\Des(\pi)|$. 

Let $P$ be a poset on $\{1,2,\dots,m\}$. A \emph{$P$-partition} is an order-reversing map $\sigma:P\to\NN$, i.e., $\sigma(x)\ge \sigma(y)$ if $x\le_P y$. For a $P$-partition $\sigma$, let $|\sigma|=\sigma(1)+\dots+\sigma(m)$.
 We denote by $\PP(P)$ the set of $P$-partitions.  For an integer $n$, we denote by $\PP(P,n)$ the set of $P$-partitions $\sigma$ satisfying $\sigma(x)\le n$ for all $x\in P$.

 We say that $P$ is \emph{naturally labeled} if $x\le_P y$ implies $x\le y$.  A \emph{linear extension} of $P$ is a permutation $\pi=\pi_1\dots\pi_m\in\Sym_m$ such that $\pi_i\le_P\pi_j$ implies $i\le j$. We denote by $\LL(P)$ the set of linear extensions of $P$.  
Note that if $P$ is naturally labeled, $\LL(P)$ always contains the identity permutation.

We need the following lemma, which gives a connection between certain generating functions for $\PP(P,n)$ and $\LL(P)$. 

\begin{lem}\label{lem:qbinom}
For a naturally labeled poset $P$ on $\{1,2,\dots,m\}$, we have  
\[
\sum_{\sigma\in \PP(P,n)} q^{|\sigma|}
=\sum_{\pi\in\LL(P)}q^{\maj(\pi)} \qbinom{n-\des(\pi)+m}{m}.
\]
\end{lem}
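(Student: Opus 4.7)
The plan is to invoke Stanley's fundamental theorem of $P$-partitions to decompose the left-hand side according to linear extensions, and then evaluate the resulting sums by the standard change of variables that converts strict inequalities into weak ones.

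Since $P$ is naturally labeled, every $\sigma \in \PP(P,n)$ determines a unique $\pi = \pi_1 \dots \pi_m \in \LL(P)$ by the sorting rule: take $\pi$ so that $\sigma(\pi_1) \ge \sigma(\pi_2) \ge \dots \ge \sigma(\pi_m)$, breaking ties by $\pi_i < \pi_{i+1}$ whenever $\sigma(\pi_i) = \sigma(\pi_{i+1})$. That this $\pi$ really is a linear extension uses both the order-reversing property of $\sigma$ and the naturally-labeled hypothesis: if $\pi_i \le_P \pi_j$ with $i > j$, then $\sigma(\pi_i) \ge \sigma(\pi_j)$ (order-reversing) and $\sigma(\pi_j) \ge \sigma(\pi_i)$ (sorting), forcing equality, whereupon the tie-breaking rule yields $\pi_j < \pi_i$, contradicting $\pi_i \le \pi_j$ from naturality. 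Moreover, if $i \in \Des(\pi)$ then $\pi_i > \pi_{i+1}$, so the tie-breaking rule forbids $\sigma(\pi_i) = \sigma(\pi_{i+1})$ and the inequality is strict. This gives the disjoint decomposition
\[
\PP(P,n) \;=\; \bigsqcup_{\pi \in \LL(P)} S_\pi(n),
\]
where $S_\pi(n)$ is the set of $\sigma$ with $n \ge \sigma(\pi_1) \ge \dots \ge \sigma(\pi_m) \ge 0$ and $\sigma(\pi_i) > \sigma(\pi_{i+1})$ for every $i \in \Des(\pi)$.

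For a fixed $\pi \in \LL(P)$, set $s_i = \sigma(\pi_i)$ and then $t_i = s_i - d_i$, where $d_i := |\Des(\pi) \cap \{i,i+1,\dots,m-1\}|$. A direct check (using $d_i - d_{i+1} = [i \in \Des(\pi)]$) converts the strict-and-weak chain defining $S_\pi(n)$ into the purely weak chain $n - \des(\pi) \ge t_1 \ge \dots \ge t_m \ge 0$, and gives $s_1 + \dots + s_m = (t_1+\dots+t_m) + (d_1+\dots+d_m)$. Double counting yields $\sum_{i=1}^m d_i = \sum_{j \in \Des(\pi)} j = \maj(\pi)$. Combining,
\[
\sum_{\sigma \in S_\pi(n)} q^{|\sigma|} \;=\; q^{\maj(\pi)} \sum_{n-\des(\pi) \ge t_1 \ge \dots \ge t_m \ge 0} q^{t_1+\dots+t_m} \;=\; q^{\maj(\pi)} \qbinom{n-\des(\pi)+m}{m},
\]
the last equality being the standard generating function for partitions fitting in an $m \times (n-\des(\pi))$ box. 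Summing over $\pi \in \LL(P)$ gives the claimed identity.

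The only substantive checks are that the sorting rule produces a genuine linear extension of $P$ and that $\sum_i d_i = \maj(\pi)$; both are routine, so I do not anticipate a real obstacle beyond bookkeeping.
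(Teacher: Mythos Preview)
Your proof is correct and follows essentially the same approach as the paper: decompose $\PP(P,n)$ according to linear extensions via Stanley's fundamental theorem of $P$-partitions, then evaluate each piece as $q^{\maj(\pi)}\qbinom{n-\des(\pi)+m}{m}$ by the standard shift that removes the strict inequalities. The only difference is that the paper cites \cite[Lemma~3.15.3]{EC1} for the decomposition and \cite[Lemma~4.5]{KimStanton17} for the inner sum, whereas you supply both arguments directly.
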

\begin{proof}
For a permutation $w\in\Sym_m$, let $S_w$ denote the set of all functions $f:P\to\NN$ satisfying the following conditions:
\begin{itemize}
\item $f(w_1)\ge f(w_2)\ge \cdots\ge f(w_m)$ and 
\item $f(w_i)>f(w_{i+1})$ if $i\in\Des(w)$. 
\end{itemize}
It is well known \cite[Lemma~3.15.3]{EC1} that
\[
\PP(P) = \biguplus_{\pi\in\LL(P)} S_\pi.
\]
Let $S_\pi(n)=S_\pi\cap \PP(P,n) $. Then we have
\[
\PP(P,n) = \biguplus_{\pi\in\LL(P)} S_\pi(n).
\]
Thus, 
\[
\sum_{\sigma\in \PP(P,n)} q^{|\sigma|} = \sum_{\pi\in\LL(P)}\sum_{\sigma\in S_\pi(n)}q^{|\sigma|} =
\sum_{\pi\in\LL(P)}\sum_{\substack{n\ge i_1\ge \cdots\ge i_m\ge0\\ i_j>i_{j+1} \:\:\: {\rm if} \:\:\: j\in\Des(\pi)}} q^{i_1+\dots+i_m}.
\]
It is shown in \cite[Lemma~4.5]{KimStanton17} that
\[
\sum_{\substack{n\ge i_1\ge \cdots\ge i_m\ge0\\ i_j>i_{j+1} \:\:\: {\rm if} \:\:\: j\in\Des(\pi)}} q^{i_1+\dots+i_m}
=q^{\maj(\pi)} \qbinom{n-\des(\pi)+m}{m}, 
\]
which completes the proof.
\end{proof}

For a poset $P$, we denote its dual by $\overline{P}$, that is, $x\le_P y$ if and only if $y\le_{\overline{P}} x$.
By definition, for a poset $P$ and an integer $n\in\NN$, we have
\begin{equation}
  \label{eq:W}
W(n\order(P),q) = \sum_{\sigma\in \PP(\overline{P},n)} q^{|\sigma|}.  
\end{equation}
Therefore, the $q$-Ehrhart polynomial $E_P(x)$ of $\order(P)$ is closely related to $P$-partitions of $\overline{P}$.
The next proposition shows that $E_P(x)$ can be written as a generating function for linear extensions of $\overline{P}$.

\begin{prop}\label{prop:Ep}
Let $P$ be a poset on $\{1,2,\dots,m\}$. Suppose that $\overline{P}$ is naturally labeled. Then
the $q$-Ehrhart polynomial of $\order(P)$ is
\[
E_P(x)=\frac{1}{[m]_q!}\sum_{\pi\in\LL(\overline{P})} q^{\maj(\pi)}
\prod_{i=1}^m([i-\des(\pi)]_q+  q^{i-\des(\pi)} x).
\]
\end{prop}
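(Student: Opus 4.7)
The approach is to exploit the uniqueness built into Theorem~\ref{thm:Chapoton}. Call $F(x)$ the right-hand side of the claimed formula. Each factor $[i-\des(\pi)]_q+q^{i-\des(\pi)}x$ is linear in $x$ with coefficients in $\QQ(q)$, so $F(x)$ is a polynomial in $x$ of degree $m$ over $\QQ(q)$. Since $\{[n]_q:n\in\NN\}$ is an infinite subset of $\QQ(q)$ and Theorem~\ref{thm:Chapoton} characterizes $E_P(x)$ as the unique polynomial in $\QQ(q)[x]$ satisfying $E_P([n]_q)=W(n\order(P),q)$ for all $n\in\NN$, it is enough to verify that $F([n]_q)=W(n\order(P),q)$ for every $n\in\NN$.

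The first step is the substitution $x=[n]_q$ into $F(x)$. The only real ingredient needed here is the $q$-integer addition formula $[a+b]_q=[a]_q+q^a[b]_q$ from the introduction. Applied with $a=i-\des(\pi)$ and $b=n$, each factor of the product collapses to a single $q$-integer:
$$[i-\des(\pi)]_q+q^{i-\des(\pi)}[n]_q=[n+i-\des(\pi)]_q.$$
Thus the product telescopes into $[n-\des(\pi)+m]_q!/[n-\des(\pi)]_q!=[m]_q!\qbinom{n-\des(\pi)+m}{m}$, cancelling the prefactor $[m]_q!$. Consequently
$$F([n]_q)=\sum_{\pi\in\LL(\overline{P})}q^{\maj(\pi)}\qbinom{n-\des(\pi)+m}{m}.$$

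The second step is to match the right-hand side with $W(n\order(P),q)$. Equation~(\ref{eq:W}) rewrites $W(n\order(P),q)$ as $\sum_{\sigma\in\PP(\overline{P},n)}q^{|\sigma|}$, and since $\overline{P}$ is naturally labeled by assumption, Lemma~\ref{lem:qbinom} expresses this generating function as precisely the same sum over linear extensions of $\overline{P}$. The two expressions agree term by term, so $F([n]_q)=W(n\order(P),q)$ and the proof is complete.

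The only point I expect to require a moment of care—and it is bookkeeping rather than a genuine obstacle—is the degenerate range $\des(\pi)>n$. On the binomial side $\qbinom{n-\des(\pi)+m}{m}$ vanishes because its top is a nonnegative integer strictly smaller than $m$. On the product side, the factor indexed by $i=\des(\pi)-n\in\{1,\dots,m-1\}$ reduces to $[-n]_q+q^{-n}[n]_q$, which is $0$ via the identity $[-n]_q=-q^{-n}[n]_q$ recorded in the introduction. Hence the telescoping identity above is valid for all $n\in\NN$, not only for $n\ge\des(\pi)$, and no extra argument is needed.
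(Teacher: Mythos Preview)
Your proof is correct and follows essentially the same approach as the paper's: define the right-hand side, evaluate at $x=[n]_q$ via the addition formula to obtain the $q$-binomial sum, and then match with $W(n\order(P),q)$ using \eqref{eq:W} and Lemma~\ref{lem:qbinom}. Your added remarks on uniqueness and on the vanishing in the range $\des(\pi)>n$ are details the paper leaves implicit, but they do not change the underlying argument.
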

\begin{proof}
Let $f(x)$ be the right hand side. Then 
\[
f([n]_q)=\sum_{\pi\in\LL(\overline{P})} q^{\maj(\pi)}
\frac{\prod_{i=1}^m[i-\des(\pi)+n]_q}{[m]_q!}=
\sum_{\pi\in\LL(\overline{P})}q^{\maj(\pi)} \qbinom{n-\des(\pi)+m}{m}.
\]
On the other hand, by Lemma~\eqref{lem:qbinom} and \eqref{eq:W}, we have
\[
W(n\order(P),q) =\sum_{\pi\in\LL(\overline{P})}q^{\maj(\pi)} \qbinom{n-\des(\pi)+m}{m}.
\]
Thus $f([n]_q)=W(n\order(P),q)$ for all $n\in\NN$ and we obtain
$E_P(x)=f(x)$. 
\end{proof}

Now we define a polynomial $F_P(q,x)$ in $q$ and $x$, which will be used throughout this paper. 

\begin{defn}
For a poset $P$ on $\{1,2,\dots,m\}$, we define
\[
F_P(q,x)=\sum_{\pi\in\LL(P)} q^{\maj(\pi)}
\prod_{i=1}^m([i-\des(\pi)]_q+  q^{i-\des(\pi)} x).
\]
\end{defn}

Note that we always have $F_P(q,x)\in \ZZ[q,x]$ because for every $\pi\in\LL(P)$, the power of $q$ in each summand is at least
\[
\maj(\pi)+\sum_{i=1}^{\des(\pi)}(i-\des(\pi)) \ge \binom{\des(\pi)+1}2 +\binom{\des(\pi)+1}2 - \des(\pi)^2\ge0 .
\]
Proposition~\ref{prop:Ep} implies that for a naturally labeled poset $P$ on $\{1,2,\dots,m\}$, we have
\begin{equation}
  \label{eq:Fp}
F_P(q,x)=[m]_q!E_{\overline{P}}(x).
\end{equation}

\begin{prop}\label{prop:parallelogram}
  Let $P$ be a poset on $\{1,2,\dots,m\}$ such that $\overline{P}$ is naturally labeled. Suppose that $a_1\le a_2\le \dots\le a_m$
is the increasing rearrangement of $\overline{\mc}(1),\dots,\overline{\mc}(m)$.
Then we have
\[ 
\NT(N_P(q,x))=C(a_1,\dots,a_m;h), 
\]
for some $h\ge a_1+\dots+a_m$ if and only if
\[ 
\NT(F_{\overline{P}}(q,x))=C(a_1,\dots,a_m;h'),
\]
for some $h'\ge a_1+\dots+a_m$. Moreover, in this case we always have $h'=h+r$, where
$r=\deg\phi(q)$ and $\phi(q)=\gcd(F_{\overline{P}}(q,x),[m]_q!)$.
\end{prop}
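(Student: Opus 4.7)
The plan is to exhibit $F_{\overline{P}}(q,x) = \phi(q)\cdot N_P(q,x)$ with $\phi(q) = \gcd(F_{\overline{P}}, [m]_q!)$, and then compute how multiplying by a univariate factor $\phi(q)$ alters the Newton polygon. By equation~\eqref{eq:Fp} applied to the naturally labeled poset $\overline{P}$, we have $F_{\overline{P}}(q,x) = [m]_q! \cdot E_P(x)$. Substituting $E_P(x) = N_P(q,x)/D(q)$ yields
\[
N_P(q,x)\cdot [m]_q! = D(q)\cdot F_{\overline{P}}(q,x)
\]
in $\ZZ[q,x]$. Since $\gcd(N_P,D)=1$ and $\ZZ[q,x]$ is a UFD, $D(q)$ divides $[m]_q!$ in $\ZZ[q]$, so $\phi(q):=[m]_q!/D(q) \in \ZZ[q]$ and $F_{\overline{P}}=\phi\cdot N_P$. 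Then
\[
\gcd(F_{\overline{P}},[m]_q!)=\gcd(\phi N_P,\phi D)=\phi\cdot\gcd(N_P,D)=\phi,
\]
identifying $\phi$ with the gcd in the statement, so $r=\deg\phi$. Moreover, since $\phi$ divides $[m]_q! = \prod_{i=1}^m[i]_q$ and each $[i]_q$ is a product of cyclotomic polynomials $\Phi_d(q)$ with $d\ge 2$, $\phi$ itself is a product of cyclotomic polynomials and is therefore monic with $\phi(0)\neq 0$.

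For the Newton polygon step, for each $k\in\ZZ$ set $A_k(f):=\{i:[q^ix^k]f\neq 0\}$. Extracting the coefficient of $x^k$ in $F_{\overline{P}}=\phi\cdot N_P$ and using that $\ZZ[q]$ is an integral domain with $\phi$ having nonzero terms in $q$-degrees $0$ and $r$, we obtain $A_k(F_{\overline{P}})=\emptyset$ iff $A_k(N_P)=\emptyset$, and otherwise
\[
\min A_k(F_{\overline{P}})=\min A_k(N_P), \qquad \max A_k(F_{\overline{P}})=\max A_k(N_P)+r.
\]
On the other hand, the polygon $C(a_1,\dots,a_m;h)$ is characterized, via its convex-hull definition, by the per-level data that $A_k$ be nonempty precisely for $0\le k\le m$ with $\min A_k=a_1+\cdots+a_k$ and $\max A_k=h-m+k$: the leftmost points $(a_1+\cdots+a_k,k)$ are in convex position because $a_1\le\cdots\le a_m$, while the rightmost points $(h-m+k,k)$ all lie on the single edge from $(h-m,0)$ to $(h,m)$. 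Combining these two ingredients, the per-level data for $N_P$ and $F_{\overline{P}}$ differ only by a shift of $r$ in the maxima, and the equivalence follows with $h'=h+r$.

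The main obstacle is making the per-level characterization of $C(a_1,\dots,a_m;h)$ rigorous, since a priori the Newton polygon could possess vertices beyond those at integer heights $k\in\{0,\dots,m\}$. This is resolved by observing that the Newton polygon of any polynomial in $(q,x)$ is the convex hull of the horizontal segments $[\min A_k,\max A_k]\times\{k\}$, so the extreme left and right $q$-coordinates at each integer height fully determine it; given the monotonicity of the $a_i$ and the fact that the right boundary is a single straight segment of slope $1$, no extraneous vertices can appear and the polygon is exactly $C(a_1,\dots,a_m;h)$.
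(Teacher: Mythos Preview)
Your proof is correct and follows essentially the same approach as the paper's: both derive $F_{\overline{P}}(q,x)=\phi(q)\,N_P(q,x)$ with $\phi$ having nonzero constant term and degree $r$, then use the resulting per-level relations $\min A_k(F_{\overline{P}})=\min A_k(N_P)$ and $\max A_k(F_{\overline{P}})=\max A_k(N_P)+r$ to conclude. You supply considerably more justification than the paper does---in particular your careful derivation that $\phi=[m]_q!/D$ and the verification $\gcd(F_{\overline{P}},[m]_q!)=\phi$, as well as the discussion of how the per-level extrema determine the Newton polygon---whereas the paper simply writes that the two displayed min/max identities ``imply the statement.''

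One minor remark: your phrase that $C(a_1,\dots,a_m;h)$ is ``characterized'' by the per-level data suggests a biconditional, but knowing $\NT(f)=C(a_1,\dots,a_m;h)$ does not force $\min A_k(f)=a_1+\cdots+a_k$ at every height (some levels could be empty while the convex hull fills them in). Fortunately your argument does not actually need that direction: what you use is that $\NT(f)$ is the convex hull of the per-level extremal segments, and that shifting all right endpoints by $r$ takes this convex hull to its Minkowski sum with $[0,r]\times\{0\}$, which sends $C(a_1,\dots,a_m;h)$ to $C(a_1,\dots,a_m;h+r)$. So the logic is sound; only the word ``characterized'' is slightly misleading.
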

\begin{proof}
By \eqref{eq:Fp}, we have
\[
F_{\overline{P}}(q,x) = N_P(q,x) \phi(q).
\]  
Since $\phi(q)$ divides $[m]_q!$, the leading coefficient and the constant term of $\phi(q)$ are both $1$. Thus, we have
\[
\phi(q) = q^r+c_{r-1}q^{r-1}+\dots+c_1q^1+1,
\]
for some $c_1,\dots,c_{r-1}\in \ZZ$.
Hence, for each $1\le k\le m$, we have
\[
\max\{i:[q^ix^k]N_{P}(q,x)\ne 0\} 
=\max\{i:[q^{i+r}x^k]F_{\overline{P}}(q,x)\ne 0\},
\]
\[
\min\{i:[q^ix^k]N_{P}(q,x)\ne0\} 
=\min\{i:[q^{i}x^k]F_{\overline{P}}(q,x)\ne0\},
\]
which imply the statement.
\end{proof}

Now we state our main theorem.

\begin{thm}\label{thm:main}
  Let $P$ be a naturally labeled poset on $\{1,2,\dots,m\}$. 
Let $b_1\le b_2\le\dots\le b_m$ be the increasing rearrangement of $\mc(1),\mc(2),\dots,\mc(m)$.
Then the Newton polygon of 
\[
F_P(q,x)=[m]_q!E_{\overline{P}}(x)=\sum_{\pi\in\LL(P)} q^{\maj(\pi)}
\prod_{i=1}^m([i-\des(\pi)]_q+  q^{i-\des(\pi)} x)
\]
is given by
\[
\NT(F_{P}(q,x))=C\left(b_1,\dots,b_m;\binom{m+1}2\right).
\]
\end{thm}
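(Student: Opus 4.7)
The plan is to compute, for each integer $k\in\{0,1,\dots,m\}$, the minimum and maximum $q$-powers appearing in $[x^k]F_P(q,x)$ with nonzero coefficient, and to show they equal $b_1+\dots+b_k$ and $\binom{m}{2}+k$ respectively. Since these are precisely the leftmost and rightmost points of $C(b_1,\dots,b_m;\binom{m+1}{2})$ at height $k$, and since the collection of such points $(b_1+\dots+b_k,k)$ and $(\binom{m+1}{2}-m+k,k)$ for $k=0,\dots,m$ includes all vertices of $C$, this pins down the Newton polygon.

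Expanding each factor in the definition of $F_P$,
\[
[x^k]F_P(q,x)\;=\;\sum_{\pi\in\LL(P)} q^{\maj(\pi)}\sum_{\substack{S\subseteq[m]\\|S|=k}}q^{\sum_{i\in S}(i-d)}\prod_{i\notin S}[i-d]_q,\qquad d=\des(\pi).
\]
Because $[0]_q=0$, whenever $d\ge 1$ only subsets $S$ with $d\in S$ contribute. A direct case split on whether $i>d$ or $i<d$ (using that $[n]_q$ has minimum $q$-power $0$ or $n$ according to the sign of $n$) shows that the minimum $q$-power of the $\pi$-summand equals
\[
\mu_k(\pi)=\begin{cases}\maj(\pi)-kd+\binom{k+1}{2},&d\le k,\\ \maj(\pi)-\binom{d}{2},&d>k,\end{cases}
\]
attained at $S=\{1,\dots,k\}$ in the first case (with coefficient $+1$) and at any $S=\{d\}\cup S'$ with $S'\subseteq[d-1]$ of size $k-1$ in the second (each with coefficient $(-1)^{d-k}$). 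A parallel analysis gives the maximum $q$-power of the $\pi$-summand.

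For the right boundary, the standard bound $\maj(\pi)\le m\,\des(\pi)-\binom{\des(\pi)+1}{2}$ implies that every $\pi$ with at least one descent contributes a maximum $q$-power strictly less than $\binom{m}{2}+k$. The identity permutation, which lies in $\LL(P)$ since $P$ is naturally labeled, contributes $q^{\binom{m}{2}+k}$ with coefficient $\binom{m}{k}>0$ (summing over all $\binom{m}{k}$ subsets $S$), so $\max\{i\colon[q^ix^k]F_P\ne 0\}=\binom{m}{2}+k$ with no cancellation.

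For the left boundary, I invoke Corollary~\ref{cor:mc}, which (as indicated in the introduction) gives the formula $\min_{\pi\in\LL(P)}(\maj(\pi)-k\des(\pi))=b_1+\dots+b_k-\binom{k+1}{2}$. When $d\le k$ this immediately yields $\mu_k(\pi)\ge b_1+\dots+b_k$. When $d>k$, applying the same corollary with parameter $d$ (rather than $k$) to the specific $\pi$ gives $\maj(\pi)-\binom{d}{2}\ge b_1+\dots+b_d$, which strictly exceeds $b_1+\dots+b_k$ since each $b_i\ge 1$. To see that the lower bound is attained, a minimizer $\pi^*$ from Corollary~\ref{cor:mc} must in fact satisfy $\des(\pi^*)\le k$: comparing the corollary at parameters $k$ and $\des(\pi^*)$ forces $\sum_{j=k+1}^{\des(\pi^*)}(j-b_j)\ge\des(\pi^*)(\des(\pi^*)-k)$, which (given $b_j\ge 1$) is impossible for $\des(\pi^*)>k$. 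Then $\pi^*$, together with any other linear extension with $\des\le k$ attaining the minimum, each contributes $+1$ at $q^{b_1+\dots+b_k}$ via $S=\{1,\dots,k\}$, so no cancellation occurs at the minimum either. The main obstacle I expect is Corollary~\ref{cor:mc} itself, a combinatorial description of $\min(\maj-k\des)$ in terms of chain sizes of $P$ proved in Section~\ref{sec:some-prop-posets}; once that tool is available, the reduction above is routine bookkeeping.
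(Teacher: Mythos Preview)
Your proposal is correct and follows essentially the same strategy as the paper: for each $k$ you determine $q_{\min}$ and $q_{\max}$ of $[x^k]F_P(q,x)$, handling the maximum via the identity permutation and the minimum via Corollary~\ref{cor:mc}. The only notable differences are cosmetic streamlinings---you work directly with the subset expansion rather than the paper's $A+B$ decomposition and the rewriting $t(\pi,k)$, you dispose of the contributions with $\des(\pi)>k$ by invoking Corollary~\ref{cor:mc} at parameter $d=\des(\pi)$ instead of iterating the descent-removal Lemma~\ref{lem:resolve} as in Lemma~\ref{lem:qmin2}, and your uniform treatment avoids the paper's separate chain/non-chain case split---but the core reduction to Corollary~\ref{cor:mc} is identical.
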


We prove Theorem~\ref{thm:main} in Section~\ref{sec:proof-theor-refthm:m}. Note that in Theorem~\ref{thm:main} we
 have
\[ b_1+\dots+b_m \le \binom{m+1}2,  \]
which follows from the fact that $b_1=1$ and $b_{i+1}\le b_i+1$ for all $i$. 

We finish this section by showing that Theorem~\ref{thm:main} implies Conjecture~\ref{conj:chapoton}.

\begin{proof}[Proof of Conjecture~\ref{conj:chapoton}]
Note that relabeling of $P$ does not affect $E_P(q,x)$. Hence, we can assume that $\overline{P}$ is naturally labeled.   
Observe that $\overline{\mc}_P(x)=\mc_{\overline{P}}(x)$ for all $x\in \{1,2,\dots,m\}$. By Theorem~\ref{thm:main}, 
\[
\NT(F_{\overline{P}}(q,x))=C\left(a_1,\dots,a_m;\binom{m+1}2\right).
\]
By Proposition~\ref{prop:parallelogram}, we obtain that
\[
\NT(N_P(q,x))=C(a_1,\dots,a_m;h),
\]
for some integer $h\ge a_1+\dots+a_m$. This completes the proof.
\end{proof}

\section{Some properties of linear extensions}
\label{sec:some-prop-posets}

In this section we prove some properties of posets which will be used in the next section.

\begin{lem}\label{lem:resolve}
Let $P$ be a naturally labeled poset on $\{1,2,\dots,m\}$ and 
$\pi\in\LL(P)$. Suppose that $\Des(\pi)\ne\emptyset$ and  $c$ is the largest descent of $\pi$.
Then there is a permutation $\sigma\in\LL(P)$ such that $\Des(\sigma)=\Des(\pi)\setminus\{c\}$. 
\end{lem}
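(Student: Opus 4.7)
The plan is to induct on the value $c$ of the largest descent, using as the basic move a rightward slide of $\pi_c$ into its correct place within the increasing tail $\pi_{c+1}<\dots<\pi_m$.

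Let $j$ be the largest index in $\{c+1,\dots,m\}$ with $\pi_j<\pi_c$; this exists because $\pi_{c+1}<\pi_c$. Define
\[
\sigma' = \pi_1\cdots\pi_{c-1}\,\pi_{c+1}\cdots\pi_j\,\pi_c\,\pi_{j+1}\cdots\pi_m.
\]
First I would check $\sigma'\in\LL(P)$: the only pairs whose relative order has changed are $(\pi_c,\pi_k)$ for $c+1\le k\le j$, and since $\pi_k<\pi_c$, natural labeling of $P$ forbids $\pi_c<_P\pi_k$, while $\pi_c>_P\pi_k$ is excluded by $\pi\in\LL(P)$. Hence $\pi_c$ is incomparable with each such $\pi_k$ in $P$, and $\sigma'$ is still a linear extension.

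Next I would read off $\Des(\sigma')$. From position $c$ onward the values are $\pi_{c+1}<\dots<\pi_j$, then $\pi_c$ with $\pi_j<\pi_c<\pi_{j+1}$ (or $j=m$), then the increasing tail $\pi_{j+1}<\dots<\pi_m$; so positions $c,c+1,\dots,m-1$ carry no descent, and positions $1,\dots,c-2$ agree with $\pi$. The only place whose descent status can change is $c-1$, where $\pi_{c-1}$ is now compared with $\pi_{c+1}$ instead of $\pi_c$. Since $\pi_{c-1}>\pi_c$ forces $\pi_{c-1}>\pi_{c+1}$, this change can only flip a non-descent into a descent, and only when $\pi_{c+1}<\pi_{c-1}<\pi_c$. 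Therefore $\Des(\sigma')$ is either $\Des(\pi)\setminus\{c\}$, in which case $\sigma=\sigma'$ finishes, or $(\Des(\pi)\setminus\{c\})\cup\{c-1\}$ with $c-1\notin\Des(\pi)$.

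In the second alternative, $c-1$ is the largest descent of $\sigma'$ and $c-1<c$, so the inductive hypothesis produces $\sigma\in\LL(P)$ with $\Des(\sigma)=\Des(\sigma')\setminus\{c-1\}=\Des(\pi)\setminus\{c\}$, the last equality using $c-1\notin\Des(\pi)$. The base case $c=1$ is automatic because there is no position $c-1=0$ to spoil the descent set, so $\sigma=\sigma'$ works directly. The main obstacle is exactly this potential cascade—the slide can introduce a fresh descent one step to the left—and the cleanest resolution I see is to observe that the largest descent of the resulting permutation has strictly decreased, so the induction on $c$ closes the argument.
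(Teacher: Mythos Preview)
Your argument is correct. The paper takes a different, non-inductive route: letting $j$ be the second largest descent of $\pi$ (or $j=0$ if $c$ is the only descent), it sets $\sigma=\pi_1\cdots\pi_j\,\pi'_{j+1}\cdots\pi'_m$ where $\pi'_{j+1}<\cdots<\pi'_m$ is the increasing rearrangement of $\pi_{j+1},\dots,\pi_m$, and checks directly that $\sigma\in\LL(P)$ (using natural labeling for the sorted tail) and that $\Des(\sigma)=\Des(\pi)\setminus\{c\}$ (since $\pi_j>\pi_{j+1}\ge\pi'_{j+1}$ preserves the descent at $j$). By sorting the entire block between the second-largest descent and the end in one move, the paper sidesteps the cascade phenomenon you identified, so no induction is needed. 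Your approach has the advantage of using a minimal local move (a single insertion) and making the obstruction explicit; the paper's has the advantage of being a one-line construction with no case analysis.
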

\begin{proof}
  Let $j$ be the second largest descent of $\pi$. If $c$ is the only descent of $\pi$, we set $j=0$.  Let $\sigma=\pi_1\dots \pi_j \pi_{j+1}' \dots \pi_m'$, where $\pi_{j+1}'<\cdots<\pi_m'$ is the increasing rearrangement of $\pi_{j+1},\dots,\pi_m$.  

We claim that $\sigma\in\LL(P)$.  Consider two elements $x$ and $y$ with $x<_P y$.  If $y$ is in $\pi_1\dots\pi_j$, since $\pi$ is a linear extension, $x$ must appear before $y$. 
Otherwise $y$ is in the increasing sequence $\pi_{j+1}' \dots \pi_m'$. Since $P$ is naturally labeled, $x<y$ and
$x$ cannot appear after $y$ in $\pi_{j+1}' \dots \pi_m'$. Thus we always have $x$ before $y$ in $\sigma$.

Since $\pi_j>\pi_{j+1}\ge \pi_{j+1}'$, we have
$\Des(\sigma)=\Des(\pi)\setminus\{c\}$. 

\end{proof}

\begin{defn}
Let $\pi=\pi_1\dots\pi_m\in \Sym_m$. A \emph{descent block} of $\pi$ is a maximal consecutive subsequence of $\pi$ which is in decreasing order. We denote by $\DB_i(\pi)$ the set of elements in the $i$th descent block of $\pi$. 
\end{defn}

For example, if $\pi=384196725$, then the descent blocks of $\pi$ are
$3$, $841$, $96$, $72$, $5$, and $\DB_1(\pi) = \{3\}$, 
$\DB_2(\pi) = \{1,4,8\}$, 
$\DB_3(\pi) = \{6,9\}$,
$\DB_4(\pi) = \{2,7\}$, $\DB_5(\pi) = \{5\}$.

\begin{lem}\label{lem:partition}
Fix  integers $r,p,k,m$ and mutually disjoint subsets $C_1,\dots,C_p$ of $\{1,2,\dots,m\}$ such that
\[
|C_1|+\dots+|C_{p-1}|\le k<|C_1|+\dots+|C_{p}|.
\]
Let $u=|C_1|+\dots+|C_{p}|-k$. Then, for every $r$-tuples $B=(B_1,\dots,B_r)$ of mutually disjoint subsets of $\{1,2,\dots,k\}$
satisfying $B_i\subseteq C_1\cup\dots \cup C_i$ for $1\le i\le r$, we have
\begin{equation}
  \label{eq:BC}
|B_1|+2|B_2|+\dots+r|B_r|\ge |C_1|+2|C_2|+\dots+(p-1)|C_{p-1}|+p(|C_p|-u).  
\end{equation}
Moreover, the equality holds if and only if the following conditions hold: $r=p$ and
$B_i=C_i$ for $1\le i\le r-1$ and $B_r\subseteq C_r$.
\end{lem}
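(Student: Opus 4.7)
The plan is to reduce $\sum_{i=1}^r i|B_i|$ to a weighted count over the $C_j$'s and then minimize by a greedy exchange.

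First, set $s_j := |B \cap C_j|$ where $B = B_1 \cup \dots \cup B_r$. Disjointness of the $C_j$'s together with the containment $B_i \subseteq C_1 \cup \dots \cup C_i$ imply that every $x \in B$ lies in a unique $C_{j(x)}$ with $j(x) \le i(x)$, where $i(x)$ is the unique index such that $x \in B_{i(x)}$. Summing the inequality $i(x) \ge j(x)$ over $x \in B$ gives
\[
\sum_{i=1}^r i|B_i| \;=\; \sum_{x \in B} i(x) \;\ge\; \sum_{x \in B} j(x) \;=\; \sum_{j=1}^p j\, s_j,
\]
with equality if and only if $B_i \subseteq C_i$ for every $i \le p$ (and $B_i = \emptyset$ for $i > p$).

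Second, using the (implicit) partition condition $B_1 \sqcup \dots \sqcup B_r = \{1, \dots, k\}$, we have $\sum_j s_j = k$ together with $s_j \le |C_j|$. A short exchange argument---if $s_j < |C_j|$ and $s_{j'} > 0$ for some $j < j'$, then moving one unit from $s_{j'}$ down to $s_j$ strictly decreases $\sum_j j\, s_j$---shows that $\sum_j j\, s_j$ is uniquely minimized at the greedy filling $s_1 = |C_1|, \dots, s_{p-1} = |C_{p-1}|$, $s_p = |C_p| - u$. Admissibility of this configuration follows directly from the hypothesis $|C_1| + \dots + |C_{p-1}| \le k < |C_1| + \dots + |C_p|$, and the minimum value is exactly the claimed RHS; chaining the two inequalities proves the main bound.

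For the equality characterization, I would combine the two equality conditions: $B_i \subseteq C_i$ for every $i$, together with $s_j = |C_j|$ for $j < p$ and $s_p = |C_p| - u$. The latter forces $C_j \subseteq B$ for $j < p$, and combined with the former gives $B_j = C_j$ for $j < p$; then $B_p \subseteq C_p$ with $|B_p| = |C_p| - u$, and $B_i = \emptyset$ for $i > p$, i.e., $r = p$ after discarding trailing empty blocks. The only point needing care is the partition condition on $(B_1, \dots, B_r)$, which is what makes the greedy step nontrivial; this is natural in the intended application (where the $B_i$'s will arise from descent blocks of a linear extension, suitably restricted to $\{1, \dots, k\}$) but should be made precise before carrying out the exchange argument.
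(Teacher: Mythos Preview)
Your argument is correct and is organized somewhat differently from the paper's. You split the inequality into two layers: first the elementwise bound
\[
\sum_{i=1}^r i|B_i|=\sum_{x\in B} i(x)\ \ge\ \sum_{x\in B} j(x)=\sum_{j=1}^p j\,s_j,
\]
and then a greedy exchange on the profile $(s_1,\dots,s_p)$ subject to $\sum_j s_j=k$ and $0\le s_j\le |C_j|$. The paper instead works directly on the $r$-tuple $(B_1,\dots,B_r)$: assuming $B$ is a minimizer that fails the equality conditions, it locates an element $x\in B_s\cap C_j$ with $j<s$ and moves it from $B_s$ to $B_j$, contradicting minimality. Your decomposition is a bit more transparent, since it separates the combinatorial content (the greedy filling of the $s_j$'s, whose unique optimum is immediate from $\sum_j j\,s_j=pk-\sum_{i<p}(s_1+\dots+s_i)$) from the set-level repackaging, and the equality analysis then falls out by intersecting the two equality cases. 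You are also right to flag the implicit hypothesis $|B_1|+\dots+|B_r|=k$: both arguments need it (without it one may take all $B_i=\emptyset$), and in the intended application it is exactly \eqref{eq:DBunion}, i.e.\ the descent blocks $\DB_1(\pi),\dots,\DB_r(\pi)$ partition the first $k$ entries of $\pi$. Your remark that ``$r=p$'' should be read modulo trailing empty blocks is accurate as well.
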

\begin{proof}
First assume that the conditions for the equality hold. Then
\[
\sum_{i=1}^r i |B_i|= \sum_{i=1}^{r-1} i |C_i|+r|B_r|.
\]
Since $p=r$, we have $|B_r|=|C_p|-u$ and the equality of \eqref{eq:BC} holds. 

Now suppose that  $B=(B_1,\dots,B_r)$ is an $r$-tuple of mutually disjoint subsets of $\{1,2,\dots,k\}$ 
satisfying $B_i\subseteq C_1\cup\dots \cup C_i$ for $1\le i\le r$ that minimizes the value
$\sum_{i=1}^r i |B_i|$. It suffices to show that $B$ satisfies the conditions for the equality. 
For a contradiction, suppose that the conditions do not hold. 
Then there are two cases.

Case 1: We can find the smallest $1\le s\le r-1$ such that $B_s\ne C_s$. By the assumption on $B$,
there is an element $x\in B_s\cap C_j$ for some $1\le j\le s-1$. Define
$B' =(B'_1,\dots,B'_r)$ to be the $r$-tuple obtained from $B$ by replacing
$B_j$ by $B_j\cup\{x\}$ and $B_s$ by $B_s\setminus\{x\}$. Then $\sum_{i=1}i|B'_i|<\sum_{i=1}i|B_i|$, which is a contradiction.

Case 2: $\DB_i(\pi)\ne C_i$ for $1\le i\le r-1$ and $\DB_r(\pi)\not\subseteq C_r$. Similarly, we can find
an element $x\in B_r\cap C_j$ for some $1\le j\le r-1$ and obtain a contradiction. 

By the above two cases, $B$ must satisfy the conditions for the equality. This finishes the proof.
\end{proof}

The following proposition is the key ingredient for proving Chapoton's conjecture. 

\begin{prop}\label{prop:mc}
Let $P$ be a naturally labeled poset on $\{1,2,\dots,m\}$. 
Suppose that $b_1\le b_2\le\dots\le b_m$ is the increasing rearrangement of $\mc(1),\mc(2),\dots,\mc(m)$ and
\[
C_i=\{x\in P: \mc(x)=i\}.
\]
Then, for $\pi\in\LL(P)$ and $0\le k\le m$, we have
\[
\maj(\pi)-k\des(\pi)+\binom{k+1}2 \ge b_1+\dots+b_k.
\]
The equality holds if and only if all of the following conditions hold:
\begin{itemize}
\item $\Des(\pi)\subseteq \{1,2,\dots,k\}$,
\item $\DB_i(\pi)=C_i$, for $1\le i\le p-1$,
\item $\DB_p(\pi)\subseteq C_p$,
\end{itemize}
where $p$  is the integer satisfying
\[
|C_1|+\dots+|C_{p-1}| < k \le |C_1|+\dots+|C_{p}|.
\]
Furthermore, for every $0\le k\le m$, there is a permutation in $\LL(P)$ satisfying these conditions.
\end{prop}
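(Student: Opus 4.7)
The plan is to factor $\maj(\pi)-k\des(\pi)+\binom{k+1}{2}$ into two manifestly non-negative pieces and then bound the main piece using Lemma~\ref{lem:partition} applied to the descent blocks of $\pi$. First I would establish the identity
\[
\maj(\pi)-k\des(\pi)+\binom{k+1}{2} \;=\; \sum_{j=1}^{k} i_\pi(j) \;+\; \sum_{\substack{d\in\Des(\pi) \\ d>k}}(d-k),
\]
where $i_\pi(j)$ denotes the index of the descent block of $\pi$ containing position $j$. Because descent blocks are separated by ascents, $i_\pi(j) = 1 + |[1,j-1]\setminus\Des(\pi)|$; writing $\maj(\pi)-k\des(\pi)=\sum_{d\in\Des(\pi)}(d-k)$ and splitting the sum by $d\le k$ versus $d>k$, the identity reduces to a short calculation using $\binom{k+1}{2}=\sum_{j=1}^{k}j$.

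The bridge to the poset is the containment $\DB_i(\pi)\subseteq C_1\cup\cdots\cup C_i$, valid because $\pi\in\LL(P)$. Given $x\in\DB_i(\pi)$ and a chain $y_1<_P\cdots<_P y_r=x$ with $r=\mc(x)$, no two consecutive $y_j<_P y_{j+1}$ can lie in the same descent block of $\pi$: such a block is decreasing as a sequence, while natural labeling gives $y_j<y_{j+1}$ and linear extension forces $y_j$ to precede $y_{j+1}$ in $\pi$. Hence the block indices of $y_1,\dots,y_r$ are strictly increasing, forcing $r\le i$.

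Now set $B_i:=\DB_i(\pi)\cap\{\pi_1,\dots,\pi_k\}$. These are disjoint with $\sum_i|B_i|=k$ and $B_i\subseteq C_1\cup\cdots\cup C_i$, and $\sum_i i|B_i|=\sum_{j=1}^{k}i_\pi(j)$ because the block index of $\pi_j$ equals $i_\pi(j)$. Applying Lemma~\ref{lem:partition} with $r$ equal to the index of the block containing position $k$ yields $\sum_i i|B_i|\ge b_1+\cdots+b_k$; combined with the identity and the non-negativity of $\sum_{d>k}(d-k)$, this finishes the inequality.

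For equality, both summands in the identity must vanish at the optimum: the second gives $\Des(\pi)\subseteq\{1,\dots,k\}$, while the equality clause of Lemma~\ref{lem:partition} translates to $\DB_i(\pi)=C_i$ for $i<p$ and $\DB_p(\pi)\subseteq C_p$, using that under the descent restriction the first $p-1$ blocks of $\pi$ occupy positions $\le k$. For existence, I would construct $\pi$ explicitly by placing $C_1,\dots,C_{p-1}$ in consecutive decreasing blocks, then a chosen $(|C_p|-u)$-subset of $C_p$ in decreasing order as the $p$-th block ending at position $k$, and finally the remaining elements in increasing order; each $C_i$ is an antichain in $P$ (if $x<_P y$ with $x,y\in C_i$ then $\mc(y)\ge\mc(x)+1$, contradicting $\mc(y)=i$), so natural labeling makes $\pi$ a valid linear extension. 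The most delicate step will be the equality bookkeeping, in particular upgrading Lemma~\ref{lem:partition}'s clause on the truncated block $B_p$ to the full block $\DB_p(\pi)\subseteq C_p$; this uses that any element of $\DB_p(\pi)$ sitting past position $k$ cannot belong to $C_j$ for $j<p$, since those $C_j$'s are already exhausted inside the first $p-1$ blocks.
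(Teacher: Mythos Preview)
Your proposal is correct and runs parallel to the paper's argument, with one genuine structural difference worth noting. The paper first invokes Lemma~\ref{lem:resolve} to argue that a minimizer of $b(\pi)=\maj(\pi)-k\des(\pi)+\binom{k+1}{2}$ may be assumed to have $\Des(\pi)\subseteq\{1,\dots,k-1\}$, and only under that hypothesis rewrites $b(\pi)=\sum_i i\,|\DB_i(\pi)|$ and feeds it to Lemma~\ref{lem:partition}. You instead prove the unconditional identity
\[
b(\pi)=\sum_{j=1}^{k} i_\pi(j)+\sum_{\substack{d\in\Des(\pi)\\ d>k}}(d-k),
\]
valid for every $\pi$, which packages the ``large descent'' penalty into a visibly nonnegative tail and lets you apply Lemma~\ref{lem:partition} directly to the truncated blocks $B_i=\DB_i(\pi)\cap\{\pi_1,\dots,\pi_k\}$. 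This buys you a cleaner necessary direction for the equality characterisation (no separate appeal to Lemma~\ref{lem:resolve}), and your remark about upgrading $B_p\subseteq C_p$ to $\DB_p(\pi)\subseteq C_p$ via exhaustion of $C_1\cup\dots\cup C_{p-1}$ is exactly the extra bookkeeping your route requires. The containment $\DB_i(\pi)\subseteq C_1\cup\dots\cup C_i$, the use of Lemma~\ref{lem:partition}, and the explicit construction of an optimal $\pi$ are the same in both proofs.
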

\begin{proof}
For $\pi\in \Sym_m$, let
\[
b(\pi) = \maj(\pi)-k\des(\pi)+\binom{k+1}2 .
\]

Suppose that $\pi$ is a permutation in $\LL(P)$ such that $b(\pi)$ is the smallest. If $\pi$ has a descent greater than $k$, by Lemma~\ref{lem:resolve}, we can find $\sigma\in\LL(P)$ with $\Des(\sigma)=\Des(\pi)\setminus\{\max(\Des(\pi))\}$.  Then $b(\sigma)<b(\pi)$, which is a contradiction.  If $k$ is the largest descent of $\pi$, by the same construction, we can remove the descent $k$ without changing $b(\pi)$. Therefore we can assume that all descents of $\pi$ are at most $k-1$.

Let $1\le t_1<t_2<\dots<t_r= k$ be the non-descents of $\pi$ among $1,2,\dots,k$, i.e., 
\[
\{t_1,t_2,\dots,t_r\} = \{1,2,\dots,k\} \setminus \Des(\pi).
\]
Then
\begin{align*}
b(\pi)  &= \binom {k+1}2 -(t_1+\dots+t_r)-k(k-r)+\binom{k+1}2\\
&= k(r+1) -(t_1+\dots+t_r).
\end{align*}
Note that $\DB_i(\pi)=\{\pi_{t_{i-1}+1},\dots,\pi_{t_i}\}$ for $1\le i\le r$, where $t_0=0$, and
\begin{equation}
  \label{eq:DBunion}
\DB_1(\pi)\cup \cdots \cup \DB_r(\pi) = \{1,2,\dots,k\}.
\end{equation}
Since $t_i=|\DB_1(\pi)|+\dots+|\DB_i(\pi)|$ for $1\le i\le r$ and $k=|\DB_1(\pi)|+\dots+|\DB_r(\pi)|$, we have
\begin{equation}
  \label{eq:b}
b(\pi)=|\DB_1(\pi)|+2|\DB_2(\pi)|+\dots+r|\DB_r(\pi)|.  
\end{equation}

We claim that $\mc(x)\le i$ for all $x\in \DB_i(\pi)$. To prove this let $\mc(x)=\ell$ and
$x_1<_P \dots <_P x_\ell = x$ be a maximal chain ending at $x$. Since $\pi$ is a linear extension of $P$, 
$x_1,\dots,x_\ell$ must occur in this order in $\pi$. Since $P$ is naturally labeled,
we have $x_1<\dots<x_\ell$. Hence each $\DB_j(\pi)$ has at most one element among $x_1,\dots,x_\ell$. This settles the claim.

The above claim implies that 
\begin{equation}
  \label{eq:DB}
\DB_i(\pi)\subseteq C_1\cup\dots\cup C_i.   
\end{equation}

Let $u=|C_1|+\dots+|C_{p}|-k$. By Lemma~\ref{lem:partition}, we have
\begin{equation}
  \label{eq:3}
b(\pi)\ge |C_1|+2|C_2|+\dots+(p-1)|C_{p-1}|+p(|C_p|-u),  
\end{equation}
where the equality holds if and only if the following conditions hold: $r=p$ and
$\DB_i(\pi)=C_i$ for $1\le i\le p-1$ and $\DB_p(\pi)\subseteq C_p$. 

Now it remains to show that there is $\pi\in\LL(P)$ satisfying the conditions for the equality.  We construct such a permutation as follows.  Let $T$ be any subset of $C_p$ such that $k=|C_1|+\dots+|C_{p-1}|+|T|$.  Let $\pi=\pi_1\dots\pi_m\in\Sym_m$ be the permutation obtained from the empty sequence by appending the elements of $C_i$ in decreasing order for $i=1,2,\dots,p-1$, the elements of $T$ in decreasing order, and the remaining integers in increasing order. Then $\pi$ satisfying the conditions for the equality. For a contradiction, suppose that $\pi\not\in\LL(P)$. Then there are two elements $x,y\in P$ such that $x<_P y$ and $y$ appears to the left of $x$ in $\pi$.  Let $\mc(x)=i$ and $\mc(y)=j$. Then $x\in C_i$ and $y\in C_j$. Since $x<_Py$, we have $i<j$ and $x<y$. If $y$ is in $\pi_1\dots\pi_k$, then $j\le p$. In this case we have $x\in C_i=\DB_i(\pi)$ and $y\in C_j=\DB_j(\pi)$, which is a contradiction to the assumption that $y$ appear to the left of $x$. If $y$ is not in $\pi_1\dots\pi_k$, we have both $x$ and $y$ in $\pi_{k+1}\dots\pi_m$. Since these elements are in increasing order, we cannot have $y$ to the left of $x$. Therefore we must have $\pi\in\LL(P)$, which finishes the proof.
\end{proof}

The following corollary is an immediate consequence of Proposition~\ref{prop:mc}. 

\begin{cor}\label{cor:mc}
Let $P$ be a naturally labeled poset on $\{1,2,\dots,m\}$. 
Suppose that $b_1\le b_2\le\dots\le b_m$ is the increasing rearrangement of $\mc(1),\mc(2),\dots,\mc(m)$.
Then, for $0\le k\le m$, we have
\[
\min\left\{\maj(\pi)-k\des(\pi)+\binom{k+1}2: \pi\in\LL(P)\right\}
=b_1+\dots+b_k.
\]
Moreover, for $1\le k\le m$, if $P$ is not a chain, we have
\[
\min\left\{\maj(\pi)-k\des(\pi)+\binom{k+1}2: \pi\in\LL(P),
1\le \des(\pi)\le k\right\} =b_1+\dots+b_k.
\]
\end{cor}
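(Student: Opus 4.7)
The first equality is immediate from Proposition~\ref{prop:mc}: that proposition shows the lower bound $\maj(\pi)-k\des(\pi)+\binom{k+1}{2}\ge b_1+\dots+b_k$ for every $\pi\in\LL(P)$, and its concluding clause produces at least one $\pi\in\LL(P)$ attaining equality. Hence the first minimum is exactly $b_1+\dots+b_k$.

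For the second equality, the task is to exhibit an extremal $\pi\in\LL(P)$ with $1\le\des(\pi)\le k$ under the assumption that $P$ is not a chain. The upper bound $\des(\pi)\le k$ is automatic because the equality conditions of Proposition~\ref{prop:mc} force $\Des(\pi)\subseteq\{1,\dots,k\}$, so the real content is to arrange for at least one descent. The plan is to revisit the explicit construction used in the proof of Proposition~\ref{prop:mc}, where one lists the elements of $C_1,\dots,C_{p-1}$ each in decreasing order, then an $s$-subset $T\subseteq C_p$ in decreasing order with $s=k-|C_1|-\dots-|C_{p-1}|$, and finally the remaining elements in increasing order. If some $|C_i|\ge 2$ with $i\le p-1$, the $i$-th decreasing block already contributes a descent; if $s\ge 2$, the block $T$ does; and if $s=1$ while $|C_p|\ge 2$, choosing $T$ so that its single element strictly exceeds some other element of $C_p$ forces a descent at position $k$, since the increasing tail then begins with that smaller element of $C_p$.

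The main obstacle is the residual sub-case in which $|C_i|=1$ for every $i\le p$, so that $k=p$ and the first $k$ entries of any extremal $\pi$ are forced to coincide with the unique increasing chain $x_1<x_2<\dots<x_k$ with $x_i\in C_i$. A max-chain argument then shows that every remaining element of $P$ strictly exceeds $x_k$, so no descent can appear at or before position $k$ in a permutation meeting the equality conditions of Proposition~\ref{prop:mc}. Here I would exploit the hypothesis that $P$ is not a chain, which in this sub-case forces $|C_j|\ge 2$ for some $j>p$, to build an extremal $\pi$ by a more delicate rearrangement that uses incomparable elements at a higher level. I expect this step to be the hardest part of the argument, since such a $\pi$ lies outside the form produced directly by Proposition~\ref{prop:mc} and may require supplementing the non-chain hypothesis with a refined structural observation about how the levels above $p$ interact with the descent structure.
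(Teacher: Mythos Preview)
Your treatment of the first equality is correct and is exactly what the paper intends: Proposition~\ref{prop:mc} gives both the lower bound and an attaining $\pi$, so the minimum is $b_1+\dots+b_k$.

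For the second equality, you have correctly isolated the only case that is not covered by the explicit construction in Proposition~\ref{prop:mc}, namely the case where $|C_i|=1$ for every $i\le p$. Your plan to ``build an extremal $\pi$ by a more delicate rearrangement'' in this sub-case cannot succeed, however, because the second assertion of the corollary is \emph{false} as stated. Take $m=3$, $P$ naturally labeled with relations $1<_P 2$ and $1<_P 3$ (so $P$ is not a chain), and $k=1$. Then $\LL(P)=\{123,132\}$ and $b_1=1$. The only linear extension with $1\le\des(\pi)\le 1$ is $\pi=132$, for which
\[
\maj(\pi)-k\,\des(\pi)+\binom{k+1}{2}=2-1+1=2\neq 1=b_1.
\]
A slightly larger instance is $m=4$, $1<_P 2<_P 3$, $2<_P 4$, with $k=2$: here $\LL(P)=\{1234,1243\}$, $b_1+b_2=3$, but the unique $\pi$ with $1\le\des(\pi)\le 2$ is $1243$, giving value $3-2+3=4$. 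In both examples the obstruction is exactly your residual sub-case: the levels $C_1,\dots,C_p$ are singletons forming a chain below all remaining elements, so any descent in a linear extension must occur strictly after position $k$, and no $\pi$ with $\des(\pi)\ge1$ can meet the equality conditions of Proposition~\ref{prop:mc}.

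The paper records the corollary as ``an immediate consequence of Proposition~\ref{prop:mc}'' and gives no further argument; the first assertion is indeed immediate, but the second is not, and in fact does not hold in the sub-case you identified. It is worth noting that this sub-case is precisely the case $b_1+\dots+b_k=\binom{k+1}{2}$, and that the paper's downstream use of the corollary (in the proof of Proposition~\ref{prop:min}) can be repaired in that case because the term $A$ alone already has $q_{\min}([x^k]A)=\binom{k+1}{2}=b_1+\dots+b_k$ with positive leading coefficient; the second assertion of the corollary is only needed when the inequality is strict, and there your case analysis does go through.
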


Note that Corollary~\ref{cor:mc} allows us to find the minimum of $\maj(\pi)-k \des(\pi)$ over  $\pi\in\LL(P)$. 
The second part of Corollary~\ref{cor:mc} means that if $1\le k\le m$ and $P$ is not a chain, 
the minimum of $\maj(\pi)-k\des(\pi)+\binom{k+1}2$ for all $\pi\in\LL(P)$ is attained for $\pi\in\LL(P)$ satisfying
$1\le \des(\pi)\le k$. This will be used in the next section.

\section{Proof of Theorem~\ref{thm:main}}
\label{sec:proof-theor-refthm:m}

In this section we assume that $P$ is a naturally labeled poset on $\{1,2,\dots,m\}$
and $b_1\le b_2\le \dots\le b_m$ is the increasing rearrangement of $\mc(1),\dots,\mc(m)$.

For a polynomial $f(q)$ in $q$, define
\begin{align*}
q_{\max}(f(q)) &= \max\{i:[q^i]f(q) \ne 0\},\\
q_{\min}(f(q)) &= \min\{i:[q^i]f(q) \ne 0\}.
\end{align*}
When $f(q)=0$, we use the following convention:
\[
q_{\max}(0)=-\infty, \qquad q_{\min}(0)=\infty.
\]

Recall that
\begin{align*}
F_P(q,x) &=\sum_{\pi\in\LL(P)} q^{\maj(\pi)}
\prod_{i=1}^m([i-\des(\pi)]_q+  q^{i-\des(\pi)} x)\\
&=\sum_{s=0}^{m-1} \sum_{\pi\in\LL(P), \des(\pi)=s} q^{\maj(\pi)}
\prod_{i=1}^m( q^{i-s} x + [i-s]_q).
\end{align*}
Since $P$ is naturally labeled, $\LL(P)$ contains the identity permutation. Therefore, 
\begin{equation}
  \label{eq:FAB}
F_P(q,x)=A+B,  
\end{equation}
where
\begin{align*}
A &=\prod_{i=1}^m( q^{i} x + [i]_q),\\
B &=x\sum_{s=1}^{m-1} \sum_{\pi\in\LL(P), \des(\pi)=s} q^{\maj(\pi)-\binom s2}
\prod_{i=1}^{s-1}(x - [i]_q)
\prod_{i=1}^{m-s}( q^{i} x + [i]_q).
\end{align*}
Since $[x^0]F_P(q,x)=[x^0]A=[m]_q!$, we have
\[
q_{\max}([x^0]F_P(q,x)) = \binom m2,\qquad q_{\min}([x^0]F_P(q,x)) = 0.
\]
Therefore, in order to prove Theorem~\ref{thm:main}, it suffices to show the following two propositions.

\begin{prop}\label{prop:max}
For $1\le k\le m$, we have
\[
q_{\max}([x^k]F_P(q,x)) = \binom m2 +k.
\]  
\end{prop}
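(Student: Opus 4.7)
The plan is to use the decomposition $F_P(q,x)=A+B$ supplied by~\eqref{eq:FAB} and to establish two separate facts: (i)~$q_{\max}([x^k]A)=\binom{m}{2}+k$ with strictly positive leading coefficient, and (ii)~every monomial appearing in $[x^k]B$ has $q$-degree at most $\binom{m}{2}+k-1$. Since the identity permutation lies in $\LL(P)$ (as $P$ is naturally labeled), the $A$-term actually appears in $F_P(q,x)$. Once (ii) is in hand, no term from $B$ can interact with the leading $q^{\binom{m}{2}+k}x^k$-coefficient coming from $A$, and Proposition~\ref{prop:max} follows immediately.

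For (i), I expand $A=\prod_{i=1}^m(q^ix+[i]_q)$ by choosing the $x$-factor from a $k$-subset $S\subseteq\{1,\dots,m\}$:
\[
[x^k]A=\sum_{|S|=k}q^{\sum_{i\in S}i}\prod_{i\notin S}[i]_q.
\]
Since $[i]_q$ has top $q$-degree $i-1$, each summand has top $q$-degree $\sum_{i\in S}i+\sum_{i\notin S}(i-1)=\binom{m+1}{2}-(m-k)=\binom{m}{2}+k$, so summing yields a top coefficient of $\binom{m}{k}>0$.

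For (ii), I fix $\pi\in\LL(P)$ with $s:=\des(\pi)\ge1$ and expand the two products $\prod_{j=1}^{s-1}(x-[j]_q)$ and $\prod_{j=1}^{m-s}(q^jx+[j]_q)$ in the corresponding summand of $B$ by selecting the $x$-factor from subsets $T\subseteq\{1,\dots,s-1\}$ and $U\subseteq\{1,\dots,m-s\}$ with $|T|+|U|=k-1$. A routine calculation bounds the top $q$-degree of the resulting monomial by
\[
\maj(\pi)-\binom{s}{2}+\Bigl(\binom{s-1}{2}-\binom{|T|}{2}\Bigr)+\Bigl(\binom{m-s}{2}+|U|\Bigr).
\]
Using the elementary bound $\maj(\sigma)\le\binom{m}{2}-\binom{m-s}{2}$ valid for any $\sigma\in\Sym_m$ with $\des(\sigma)=s$ (attained when $\Des(\sigma)=\{m-s,\dots,m-1\}$), together with $|U|\le k-1$ and $|T|\ge0$, the above reduces to at most $\binom{m}{2}+k-s\le\binom{m}{2}+k-1$.

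The main obstacle, modest as it is, is the bookkeeping in step (ii). The sign-changing factors $(x-[j]_q)$ in $B$ are harmless precisely because the $q$-degree bound is strict, so no cancellation between the summands for different $\pi$ can ever recover degree $\binom{m}{2}+k$. Beyond the identity permutation lying in $\LL(P)$, no deeper combinatorics about $P$ is needed here; the poset structure enters substantively only in the companion lower bound for $q_{\min}([x^k]F_P(q,x))$, where Corollary~\ref{cor:mc} is used.
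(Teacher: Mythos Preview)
Your proposal is correct and follows essentially the same approach as the paper: both use the decomposition $F_P(q,x)=A+B$ from~\eqref{eq:FAB}, establish $q_{\max}([x^k]A)=\binom{m}{2}+k$ by picking the top monomial from each factor, and then bound each $\pi$-summand of $B$ by $\binom{m}{2}+k-s$ via the inequality $\maj(\pi)\le\binom{m}{2}-\binom{m-s}{2}$ together with a routine degree count on the two products. Your write-up is slightly more explicit (introducing the subsets $T,U$ and noting the leading coefficient $\binom{m}{k}>0$), but the argument is the same.
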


\begin{prop}\label{prop:min}
For $1\le k\le m$, we have
\[
q_{\min}([x^k]F_P(q,x)) = b_1+\dots+b_k.
\]  
\end{prop}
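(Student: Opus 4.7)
The plan is to bound, for each linear extension $\pi \in \LL(P)$, the minimum $q$-power of the $\pi$-summand
\[
g_\pi := q^{\maj(\pi)} \prod_{i=1}^m\bigl([i-\des(\pi)]_q + q^{i-\des(\pi)} x\bigr)
\]
of $F_P(q,x)$, and then combine these contributions across $\pi$ using Corollary~\ref{cor:mc}. Using the rewriting from \eqref{eq:FAB}, for $s = \des(\pi) \ge 1$ one has
\[
g_\pi = q^{\maj(\pi) - \binom{s}{2}} \cdot x \cdot \prod_{j=1}^{s-1}(x - [j]_q) \prod_{j=1}^{m-s}([j]_q + q^j x).
\]
I would expand $[x^k]g_\pi$ as a sum over pairs $(S_1, S_2)$ with $S_1 \subseteq \{1,\dots,s-1\}$, $S_2 \subseteq \{s+1,\dots,m\}$, and $|S_1| + |S_2| = k-1$, encoding which factors contribute their $x$-monomial; each such summand has minimum $q$-power $\maj(\pi) - \binom{s}{2} + \sum_{i \in S_2}(i-s)$, since all remaining $[\cdot]_q$ factors contribute constant term $1$.

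Next I would split into two cases based on $\des(\pi)$ versus $k$. When $\des(\pi) \le k$, taking $|S_1| = s-1$ (forcing $S_1 = \{1,\dots,s-1\}$) and $S_2 = \{s+1,\dots,k\}$ is the unique minimizer, giving minimum $q$-power $\maj(\pi) - k\des(\pi) + \binom{k+1}{2}$ with coefficient $+1$ (the sign $(-1)^{s-1-|S_1|}$ evaluates to $+1$, and every remaining $[\cdot]_q$ factor contributes constant term $1$); the identity $\pi$ (i.e., $s=0$) is handled analogously. When $\des(\pi) > k$, the same minimization forces $|S_2| = 0$ and yields minimum $q$-power $\maj(\pi) - \binom{s}{2}$; applying Corollary~\ref{cor:mc} with $k$ replaced by $s$, and using $\binom{s+1}{2} - s^2 = -\binom{s}{2}$, one obtains
\[
\maj(\pi) - \binom{s}{2} \ge b_1 + \dots + b_s > b_1 + \dots + b_k,
\]
since $b_j \ge 1$ for all $j$.

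Combining these, every $\pi \in \LL(P)$ contributes to $[x^k]F_P(q,x)$ only at $q$-powers $\ge b_1+\dots+b_k$, which yields the lower bound $q_{\min}([x^k]F_P(q,x)) \ge b_1+\dots+b_k$. For the matching upper bound, Proposition~\ref{prop:mc} guarantees the existence of $\pi^* \in \LL(P)$ with $\des(\pi^*) \le k$ satisfying $\maj(\pi^*) - k\des(\pi^*) + \binom{k+1}{2} = b_1+\dots+b_k$; each such $\pi^*$ contributes $+1$ at $q^{b_1+\dots+b_k}$, so the coefficient there is a positive integer. The main obstacle I expect is ruling out cancellation at this critical $q$-power: the alternating signs $(-1)^{s-1-|S_1|}$ coming from permutations with many descents could in principle interfere with the leading term, but the refined bound in the case $\des(\pi) > k$, obtained by invoking Corollary~\ref{cor:mc} at the level $k' = s$ rather than at $k$, pushes all such contributions strictly above $q^{b_1+\dots+b_k}$ and is the crux making the term-by-term analysis go through.
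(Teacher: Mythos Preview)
Your argument is correct and follows the same overall strategy as the paper: bound the minimal $q$-exponent of each summand $[x^k]g_\pi$ separately, show that the lowest-order coefficient is $+1$ whenever $\des(\pi)\le k$ (via the unique minimizing choice of $(S_1,S_2)$), and then invoke Proposition~\ref{prop:mc}/Corollary~\ref{cor:mc}. The one substantive difference is how you dispose of the linear extensions with $\des(\pi)=s>k$. The paper, inside Lemma~\ref{lem:qmin2}, repeatedly applies Lemma~\ref{lem:resolve} to $\pi$ to manufacture some $\tau\in\LL(P)$ with $\des(\tau)=k$ and $\maj(\tau)-\binom{k}{2}<\maj(\pi)-\binom{s}{2}$; this forces the chain/non-chain case split and the use of the second clause of Corollary~\ref{cor:mc}. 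Your trick of applying the \emph{first} clause of Corollary~\ref{cor:mc} at the shifted level $k'=s$ gives $\maj(\pi)-\binom{s}{2}\ge b_1+\dots+b_s>b_1+\dots+b_k$ in one stroke, so you avoid Lemma~\ref{lem:resolve} here entirely and treat chains and non-chains uniformly. You are also more explicit than the paper about why no cancellation can occur at $q^{b_1+\dots+b_k}$, which is a point the paper leaves implicit in the proof of Lemma~\ref{lem:qmin}.
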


\begin{proof}[Proof of Proposition~\ref{prop:max}]
By \eqref{eq:FAB}, it is enough to show that
\begin{equation}
  \label{eq:Amax}
q_{\max}([x^k]A) = \binom m2 +k,
\end{equation}
\begin{equation}
  \label{eq:Bmax}
q_{\max}([x^k]B) < \binom m2 +k.
\end{equation}
In order to get the largest power of $q$, when we expand the product in $A$, we 
must select $q^ix$ or $q^{i-1}$. This implies \eqref{eq:Amax}. 

To prove \eqref{eq:Bmax}, consider $\pi\in\Sym_m$ with $\des(\pi)=s\ge1$.
Then 
\begin{align*}
&q_{\max}\left([x^{k-1}] q^{\maj(\pi)-\binom s2}
\prod_{i=1}^{s-1}(x - [i]_q) \prod_{i=1}^{m-s}( q^{i} x + [i]_q)
\right)\\
&\le \sum_{i=1}^s(m-i)-\binom s2 + 
\sum_{i=1}^{s-1}(i-1) +\sum_{i=1}^{m-s}(i-1) + (k-1)\\
&= \binom m2 -(s-1)+(k-1) < \binom m2 +k.
\end{align*}
Therefore, we obtain~\eqref{eq:Bmax}. 
\end{proof}

The rest of this section is devoted to proving Proposition~\ref{prop:min}.  

For $\pi\in\Sym_m$ with $\des(\pi)=s\ge1$ and an integer $1\le k\le m$, let
\begin{equation}
  \label{eq:t}
t(\pi,k)=[x^{k-1}] q^{\maj(\pi)-\binom s2}
\prod_{i=1}^{s-1}(x - [i]_q) \prod_{i=1}^{m-s}( q^{i} x + [i]_q).
\end{equation}
Then we always have
\begin{equation}
  \label{eq:maj-s2}
q_{\min}(t(\pi,k)) \ge \maj(\pi)-\binom{\des(\pi)}2.  
\end{equation}

We need the following two lemmas.

\begin{lem}\label{lem:qmin}
Let $\pi\in\Sym_m$ with $\des(\pi)=s\ge1$. Then,
for $s\le k\le m$, we have
\[
q_{\min}(t(\pi,k))  =\maj(\pi)-ks+\binom{k+1}2.
\]
\end{lem}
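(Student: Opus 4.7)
The plan is to expand the product in \eqref{eq:t} and isolate the coefficient of $x^{k-1}$, then identify the term (or terms) carrying the smallest power of $q$. Every term in the expansion of $\prod_{i=1}^{s-1}(x-[i]_q)\prod_{i=1}^{m-s}(q^i x+[i]_q)$ is obtained by choosing, from each factor, either the $x$-summand or the constant-in-$x$ summand; the coefficient of $x^{k-1}$ is therefore a sum indexed by pairs $(A,B)$ with $A\subseteq\{1,\dots,s-1\}$, $B\subseteq\{1,\dots,m-s\}$ and $|A|+|B|=k-1$, where $A$ (resp.~$B$) records the first-type (resp.~second-type) factors from which $x$ is selected.

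For a fixed pattern with $|B|=b$, the selected $x$'s in the second product contribute $q^{\sum_{i\in B}i}$, while the remaining constant-in-$x$ pieces come from $[i]_q$'s (lowest term $q^0$) or $-[i]_q$'s (lowest term $-q^0$). Hence for any such pattern the resulting $q$-power is at least $\sum_{i\in B}i\ge\binom{b+1}{2}$, with equality iff $B=\{1,\dots,b\}$ and every leftover $[i]_q$ contributes its $q^0$ term. Since $|A|\le s-1$, we have $b=k-1-|A|\ge k-s$, which is nonnegative by $s\le k$, and the upper bound $b\le m-s$ holds because $k\le m$. Minimizing $\binom{b+1}{2}$ over $b\ge k-s$ yields $\binom{k-s+1}{2}$, so the minimum $q$-power in the $x^{k-1}$-coefficient is bounded below by $\binom{k-s+1}{2}$.

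The next step is to check that this lower bound is actually attained---that no cancellation occurs among minimal patterns. The minimum is realized by the \emph{unique} pattern with $A=\{1,\dots,s-1\}$ (so all first-type factors supply their $x$, leaving no $-1$'s to produce signs), $B=\{1,\dots,k-s\}$, and the constant term chosen from each remaining $[i]_q$; this pattern contributes coefficient $+1$, so nothing cancels.

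Combining these observations and multiplying by the prefactor $q^{\maj(\pi)-\binom{s}{2}}$ gives $q_{\min}(t(\pi,k))=\maj(\pi)-\binom{s}{2}+\binom{k-s+1}{2}$, and the claimed formula follows from the identity
\[
\binom{k-s+1}{2}-\binom{s}{2}=\binom{k+1}{2}-ks,
\]
which is immediate by direct expansion. The only mildly delicate aspect is the uniqueness argument in the non-cancellation step, but this is essentially forced: $b=k-s$ is the unique minimizer of $\binom{b+1}{2}$ in the feasible range $k-s\le b\le m-s$ supplied by the hypothesis $s\le k\le m$, and within that $b$ the minimizing $A$, $B$, and choice of constant terms are all uniquely determined. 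I do not anticipate a serious obstacle.
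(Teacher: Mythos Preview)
Your argument is correct and follows exactly the same route as the paper's own proof: identify the unique expansion pattern (take $x$ from every factor of the first product and from the factors $i=1,\dots,k-s$ of the second product) that yields the minimal $q$-power $\maj(\pi)-\binom{s}{2}+\binom{k-s+1}{2}$, then rewrite via the binomial identity. The paper compresses this into a single ``it is easy to see'' sentence, whereas you spell out the parametrization by $(A,B)$ and the non-cancellation check, but the underlying idea is identical.
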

\begin{proof}
It is easy to see that the smallest power of $q$ in $[x^{k-1}]t(\pi,k)$ 
is obtained if and only if we select $x^{s-1}$ in the first product
and $q^{1+2+\dots+(k-s)}x^{k-s}$ in the second product in \eqref{eq:t}. 
Thus, we obtain 
\[
q_{\min}(t(\pi,k)) = \maj(\pi)-\binom s2+\binom{k-s+1}2, 
\]
which is equivalent to the lemma.
\end{proof}

\begin{lem}\label{lem:qmin2}
Let $P$ be a naturally labeled poset on  $\{1,2,\dots,m\}$. 
Suppose that  $P$ is not a chain. Then, for $1\le k\le m$, we have
\[
q_{\min}([x^{k}]B) = \min\left\{
\maj(\pi)-k\des(\pi)+\binom{k+1}2 : \pi\in\LL(P), 1\le \des(\pi)\le k 
\right\}.
\]
\end{lem}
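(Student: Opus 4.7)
The plan is to split
\[
[x^k]B=\sum_{s=1}^{m-1}\sum_{\pi\in\LL(P),\, \des(\pi)=s} t(\pi,k)
\]
and analyze the minimum $q$-degree of each summand in the two cases $1\le s\le k$ and $s>k$. For $1\le s\le k$, Lemma~\ref{lem:qmin} immediately gives $q_{\min}(t(\pi,k))=\maj(\pi)-ks+\binom{k+1}{2}$, whose minimum over such $\pi$ is exactly the right-hand side of Lemma~\ref{lem:qmin2}; by Corollary~\ref{cor:mc} this minimum equals $b_1+\dots+b_k$.

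For $s>k$, I would compute $q_{\min}(t(\pi,k))$ by expanding $[x^{k-1}]\prod_{i=1}^{s-1}(x-[i]_q)\prod_{i=1}^{m-s}(q^ix+[i]_q)$: selecting $j$ factors of $x$ from the first product and $k-1-j$ factors of $q^ix$ from the second product (using the smallest indices $1,\dots,k-1-j$ to minimize) yields a minimum $q$-contribution of $\binom{k-j}{2}$. Since $s>k$, the constraint $j\le s-1$ does not prevent the choice $j=k-1$, which gives the overall minimum $q_{\min}(t(\pi,k))=\maj(\pi)-\binom{s}{2}$. Applying Corollary~\ref{cor:mc} with $s$ in place of $k$ yields $\maj(\pi)-\binom{s}{2}\ge b_1+\dots+b_s$, and since $s>k$ and each $b_i\ge 1$ we obtain the strict inequality $q_{\min}(t(\pi,k))>b_1+\dots+b_k$. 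Combined with the $s\le k$ case, this shows $q_{\min}([x^k]B)\ge b_1+\dots+b_k$.

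To rule out cancellation at degree $b_1+\dots+b_k$, observe that for each $\pi$ with $1\le s=\des(\pi)\le k$ attaining the minimum $\maj(\pi)-ks+\binom{k+1}{2}=b_1+\dots+b_k$, the minimum $q$-power of $t(\pi,k)$ is attained by a unique combinatorial choice (take $x$ from every factor of the first product and $q^ix$ for $i=1,\dots,k-s$ in the second), contributing coefficient $+1$. Because $P$ is not a chain, the second part of Corollary~\ref{cor:mc} guarantees at least one such $\pi$ exists, so the coefficient of $q^{b_1+\dots+b_k}$ in $[x^k]B$ is a positive integer counting these permutations, and therefore $q_{\min}([x^k]B)=b_1+\dots+b_k$ as required. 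The subtlest step is the bound for $s>k$: one must confirm that $j=k-1$ is feasible and really attains the minimum, and then invoke Corollary~\ref{cor:mc} at parameter $s$ together with $b_i\ge 1$ to strictly exceed $b_1+\dots+b_k$.
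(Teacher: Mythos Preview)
Your argument is correct, and the overall structure---splitting into $1\le s\le k$ versus $s>k$, using Lemma~\ref{lem:qmin} for the former, and then checking non-cancellation---matches the paper's. The genuine difference lies in how you handle the range $s>k$.

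The paper bounds $q_{\min}(t(\pi,k))\ge \maj(\pi)-\binom{s}{2}$ just as you do, but then invokes Lemma~\ref{lem:resolve} directly: starting from $\pi$ with $\des(\pi)=s>k$, it repeatedly deletes the largest descent to produce $\tau\in\LL(P)$ with $\des(\tau)=k$ and $\maj(\tau)-\binom{k}{2}<\maj(\pi)-\binom{s}{2}$; since $q_{\min}(t(\tau,k))=\maj(\tau)-\binom{k}{2}$ by Lemma~\ref{lem:qmin}, this immediately gives $m_1<m_2$. Your route instead feeds the same inequality into Corollary~\ref{cor:mc} with parameter $s$ to obtain $\maj(\pi)-\binom{s}{2}\ge b_1+\dots+b_s>b_1+\dots+b_k$, and then identifies $b_1+\dots+b_k$ with the right-hand side of the lemma, again via Corollary~\ref{cor:mc}. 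So you are importing Corollary~\ref{cor:mc} (and hence Proposition~\ref{prop:mc}) into the proof of this lemma, whereas the paper keeps Lemma~\ref{lem:qmin2} self-contained (using only Lemmas~\ref{lem:resolve} and~\ref{lem:qmin}) and defers the appeal to Corollary~\ref{cor:mc} to the proof of Proposition~\ref{prop:min}. There is no circularity in what you do---Corollary~\ref{cor:mc} does not depend on Lemma~\ref{lem:qmin2}---it is simply a different packaging of the same ingredients. An advantage of your write-up is that you make the non-cancellation step explicit (each minimizing $\pi$ contributes coefficient $+1$), which the paper leaves implicit.

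One small point: your asserted \emph{equality} $q_{\min}(t(\pi,k))=\maj(\pi)-\binom{s}{2}$ for $s>k$ is true, but your justification does not address possible sign cancellation among the $\binom{s-1}{k-1}$ ways of choosing the $k-1$ factors of $x$ from the first product (each of the remaining $s-k$ factors contributes a constant term $-1$). In fact the constant term in $q$ equals $(-1)^{s-k}\binom{s-1}{k-1}\neq 0$, so equality does hold; but since your argument only needs the trivial inequality $q_{\min}(t(\pi,k))\ge\maj(\pi)-\binom{s}{2}$ of \eqref{eq:maj-s2}, you could simply state that and avoid the issue altogether.
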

\begin{proof}
Let
\begin{align*}
m_1 &= \min\{q_{\min}(t(\pi,k)): \pi\in\LL(P), 1 \le \des(\pi)\le k \},\\
m_2 &=  \min\{q_{\min}(t(\pi,k)) : \pi\in\LL(P), \des(\pi)\ge k+1 \}.
\end{align*}

By \eqref{eq:maj-s2}, for $\pi\in\LL(P)$ with $\des(\pi)\ge k+1$, we have
\[
q_{\min}(t(\pi,k)) \ge \maj(\pi)-\binom{\des(\pi)}2.  
\]
By Lemma~\ref{lem:resolve}, we can find 
$\sigma\in \LL(P)$ such that $\Des(\sigma)=\Des(\pi)\setminus \{\max(\Des(\pi))\}$.
Since $\max(\Des(\pi))\ge \des(\pi)$, we have
\[
\maj(\pi)-\binom{\des(\pi)}2 \ge 
\maj(\sigma)+\des(\pi)-\binom{\des(\pi)}2
>\maj(\sigma)-\binom{\des(\sigma)}2.
\]
By repeating this argument, we obtain that
\begin{equation}
  \label{eq:1}
\maj(\pi)-\binom{\des(\pi)}2 
>\maj(\tau)-\binom{\des(\tau)}2,
\end{equation}
for some $\tau\in\LL(P)$ with $\des(\tau)=k$. 

On the other hand, by Lemma~\ref{lem:qmin}, we have
\begin{equation}
  \label{eq:2}
q_{\min}([x^{k-1}]t(\tau,k))=\maj(\tau)-k^2+\binom{k+1}2=
\maj(\tau)-\binom{\des(\tau)}2.
\end{equation}

By \eqref{eq:1} and \eqref{eq:2}, we have $m_1 < m_2$. 
Therefore $q_{\min}([x^{k}]B) = m_1$. By applying Lemma~\ref{lem:qmin} to $m_1$, 
we obtain the desired identity.
\end{proof}

Now we give a proof of Proposition~\ref{prop:min}.

\begin{proof}[Proof of Proposition~\ref{prop:min}]
First, observe that
\[
q_{\min}([x^k]A) = \binom{k+1}2.
\]

If $P$ is a chain, then the identity permutation is the only linear extension of $P$. In this case $B=0$ and $b_i=i$. Thus 
\[
q_{\min}([x^k]F_P(q,x)) = q_{\min}([x^k]A) =\binom{k+1}2=
b_1+\dots+b_k.
\]

Now suppose that $P$ is not a chain. By Lemma~\ref{lem:qmin2} and Corollary~\ref{cor:mc} we have
\[
q_{\min}([x^k]B) =b_1+\dots+b_k \le  \binom{k+1}2.
\]
Therefore we also obtain
\[
q_{\min}([x^k]F_P(q,x)) = b_1+\dots+b_k. \qedhere
\]
\end{proof}

\section*{Acknowledgement}

The authors would like to thank the anonymous referee for his or her careful reading and helpful comments.


\begin{thebibliography}{1}

\bibitem{Barvinok1999}
A.~Barvinok and J.~E. Pommersheim.
\newblock An algorithmic theory of lattice points.
\newblock {\em New perspectives in algebraic combinatorics}, 38:91, 1999.

\bibitem{Beck2012}
M.~Beck.
\newblock Combinatorial reciprocity theorems.
\newblock {\em Jahresbericht der Deutschen Mathematiker-Vereinigung},
  114(1):3--22, 2012.

\bibitem{Beck2007}
M.~Beck and S.~Robins.
\newblock {\em Computing the continuous discretely}.
\newblock Springer, 2007.

\bibitem{Chapoton2016}
F.~Chapoton.
\newblock q-analogues of {E}hrhart polynomials.
\newblock {\em Proceedings of the Edinburgh Mathematical Society (Series 2)},
  59:339--358, 2016.

\bibitem{Ehrhart}
E.~Ehrhart.
\newblock Sur les poly{\`e}dres rationnels homoth{\'e}tiques {\`a} {$n$}\
  dimensions.
\newblock {\em C. R. Acad. Sci. Paris}, 254:616--618, 1962.

\bibitem{KimStanton17}
J.~S. Kim and D.~Stanton.
\newblock On {$q$}-integrals over order polytopes.
\newblock {\em Adv. Math.}, 308:1269--1317, 2017.

\bibitem{Stanley_two_poset}
R.~P. Stanley.
\newblock Two poset polytopes.
\newblock {\em Discrete Comput. Geom.}, 1(1):9--23, 1986.

\bibitem{EC1}
R.~P. Stanley.
\newblock {\em Enumerative Combinatorics. {V}ol. 1, second ed.}
\newblock Cambridge University Press, New York/Cambridge, 2011.

\end{thebibliography}
\end{document}